\def\eqref#1{equation~\ref{#1}}
\def\1{\bm{1}}
\DeclareMathAlphabet{\mathsfit}{\encodingdefault}{\sfdefault}{m}{sl}
\SetMathAlphabet{\mathsfit}{bold}{\encodingdefault}{\sfdefault}{bx}{n}
\DeclareMathOperator*{\argmin}{arg\,min}
\newcommand{\gf}{\nabla f}
\newcommand{\gh}{\nabla h}
\newcommand{\dual}[1]{\left\langle {#1} \right\rangle}
\newtheorem{remark}{Remark}[section]
\newtheorem{theorem}{Theorem}[section]
\newcommand{\bs}{\boldsymbol}
\newcommand{\nm}[1]{\left\lVert {#1} \right\rVert}
\newcommand{\ssnm}[1]
{
  \left\vert\kern-0.25ex
  \left\vert\kern-0.25ex
  \left\vert
  {#1}
  \right\vert\kern-0.25ex
  \right\vert\kern-0.25ex
  \right\vert
}
\newcommand{\LC}[1]{\textcolor{blue}{#1}}
\newcommand{\RV}[1]{#1}
\newtheorem{lemma}[theorem]{Lemma}
\newtheorem{proposition}[theorem]{Proposition}
  \newcounter{mnote}
  \let\oldmarginpar\marginpar
    \renewcommand\marginpar[1]{\-\oldmarginpar[\raggedleft\footnotesize #1]%
    {\raggedright\footnotesize #1}}
\title{Adaptive Accelerated Gradient Descent Methods for Convex Optimization}
\author{%
  Zeyi Xu \\
  Department of Mathematics\\
  University of California, Irvine\\
  Irvine, CA 92697 \\
  \texttt{zeyix1@uci.edu} \\
  \And
 Long Chen \\
Department of Mathematics\\
University of California, Irvine\\
Irvine, CA 92697 \\
\texttt{chenlong@uci.edu} \\
}
\begin{document}

\maketitle

\begin{abstract}
This work proposes A$^2$GD, a novel adaptive accelerated gradient descent method for convex and composite optimization. Smoothness and convexity constants are updated via Lyapunov analysis. Inspired by stability analysis in ODE solvers, the method triggers line search only when accumulated perturbations become positive, thereby reducing gradient evaluations while preserving strong convergence guarantees. By integrating adaptive step size and momentum acceleration, A$^2$GD outperforms existing first-order methods across a range of problem settings.
\end{abstract}



\section{Introduction}


In this paper, we study the convex optimization problem  
\begin{equation}\label{eq:opt_basic}
    \min_{x \in \mathbb{R}^d} f(x),
\end{equation}
where $f$ is $\mu$-strongly convex and $L$-smooth. When $\mu = 0$, we additionally assume $f$ is coercive so that a global minimizer exists.  
We also consider the composite convex problem  
\begin{equation}\label{eq:opt_composite}
    \min_{x \in \mathbb{R}^d} f(x) := h(x) + g(x),
\end{equation}
where $h$ is $L$-smooth and $g$ is convex, possibly non-smooth, with a proximal operator. 


First-order methods, which rely only on gradient information, are widely used in machine learning for their efficiency and scalability~\citep{BottouCurtisNocedal2018}. Among them, \textit{gradient descent} (GD), defined by
\begin{equation}\label{eq:GD}
    x_{k+1} = x_k - \alpha_k \nabla f(x_k),
\end{equation}
is fundamental. Despite its simplicity, GD faces two main challenges:

\begin{itemize}[leftmargin=10pt, topsep=0pt, partopsep=0pt]
    \item \textbf{Step size selection.} Convergence depends heavily on the step size $\alpha_k$. Small $\alpha_k$ slows progress; large $\alpha_k$ risks divergence. For $L$-smooth functions, $\alpha_k = 1/L$ is standard, but this global constant often mismatches local curvature.
    
    \item \textbf{Slow convergence.} Even with an optimal step size, GD is slow on ill-conditioned problems, i.e., when $L/\mu \gg 1$.
\end{itemize}

We briefly review strategies addressing these issues:

%

\vskip -12pt

\paragraph{Adaptive step sizes}  
Adaptive schemes such as the Barzilai--Borwein (BB) method~\citep{Barzilai;Borwein:1988Two-Point} estimate step sizes from past iterates:  
\begin{equation}\label{eq:BB}
\alpha_k = \frac{\langle x_k - x_{k-1}, \nabla f(x_k) - \nabla f(x_{k-1}) \rangle}{\| \nabla f(x_k) - \nabla f(x_{k-1}) \|^2}, 
\end{equation}
with low computational overhead. However, BB-type methods are heuristic, and may diverge even for simple convex problems~\citep{Burdakov;Dai;Huang:2019Stabilized}; guarantees are largely limited to quadratic cases~\citep{Dai-BBconv}. Extensions~\citep{Zhou2006-adaptive,Dai2015-BB} improve robustness but still lack general theory.

Polyak’s method~\citep{POLYAK1969}, foundational to adaptive approaches such as AdaGrad and AMSGrad~\citep{Vaswani2020AdaptiveGM}, ensures convergence but requires the optimal value $f^*$, which is rarely available. 

\paragraph{Acceleration}  
Momentum-based methods accelerate convergence by leveraging past updates. The heavy-ball method~\citep{polyak1964some} and Nesterov’s accelerated gradient (NAG)~\citep{nesterov2003introductory} achieve the optimal rate $1 - \sqrt{\mu/L}$ under strong convexity, assuming known $L$ and $\mu$. In the convex case ($\mu = 0$), NAG with step size $1/(k+3)$~\citep{nesterov1983method} achieves the optimal $O(1/k^2)$ rate. Nesterov later extended this framework to composite problems by incorporating line search into accelerated proximal methods~\citep{Nesterov2012GradientMF}, also attaining $O(1/k^2)$.
\RV{Despite the effectiveness, NAG are known to suffer from oscillations. Restarting is used to mitigate this issue \citet{o2015adaptive}.}

\paragraph{Backtracking line search}  
Backtracking line search begins with a large step size $\alpha_k$ and reduces it until conditions such as the Armijo–Goldstein criterion~\citep{Armijo1966,Goldstein1962} or Wolfe condition~\citep{Wolfe1969} are satisfied. Extensions~\citep{Ito2021,Liu2017} adapt line search to composite settings. \citet{guminov2019} update parameters in NAG with backtracking, while \citet{lan2023optimal} develop a parameter-free method that attains optimal bounds for convex problems, and the best known results for nonconvex settings. An adaptive variant~\citep{cavalcanti2025adaptive} reduces backtracking steps, improving efficiency. Despite robustness and simplicity, line search usually requires $3$-$4$ extra function or gradient evaluations per iteration, increasing cost.

\paragraph{Line-search free methods.}
Recent years have seen growing interest in line-search free adaptive methods. These algorithms keep the per-iteration cost of gradient descent while often achieving faster convergence and lower sensitivity to hyperparameters. \RV{\citet{levy2018online} incorporate AdaGrad-style adaptive step sizes into NAG and further extend to stochastic settings. However, their approach adapts only $L$, limiting its ability to go beyond standard NAG. }\citet{Malitsky;Mishchenko:2020Adaptive,Malitsky2023AdaptivePG} introduced adaptive proximal gradient methods with theoretical guarantees, though lack of acceleration can hinder performance on ill-conditioned problems. \citet{li2024simpleuniformlyoptimalmethod} and \citet{cavalcanti2025adaptiveaccelerationstrongconvexity} proposed adaptive NAG variants with backtracking-free updates, \RV{in which both $L$ and $\mu$ are adaptive enabling stronger numerical performances}.


In training deep neural networks, Adam (Adaptive Moment Estimation)~\citep{kingma2015adam} is a widely used optimization method that combines momentum with adaptive step sizes to stabilize and accelerate \RV{stochastic gradients}. However, the original Adam algorithm does not provide convergence guarantees, even in convex settings.


\begin{center}
    \textsc{Contribution}
\end{center}

\begin{itemize}[leftmargin=10pt, topsep=0pt, partopsep=0pt]
    \item We develop A$^2$GD, an adaptive accelerated gradient method with provable accelerated linear convergence for smooth (\ref{eq:opt_basic}) and composite convex optimization (\ref{eq:opt_composite}). 

\item We adapt stability analysis from ODE solvers to reduce line search overhead, activating it only when accumulated perturbations are positive. The method is thus line-search reduced rather than line-search free (Fig.~\ref{fig:compare_adGD_adGDwithlinesearch}), and it outperforms existing line-search free methods in both theory and practice.

    \item We show numerically that A$^2$GD also consistently outperforms AGD variants (where a single A denotes either adaptivity or acceleration) and other methods combining adaptivity and acceleration.
\end{itemize}  
%

\paragraph{Limitations and Extensions}
While A$^2$GD achieves adaptive acceleration with theoretical guarantees, these results rely on convexity, and extending the framework to nonconvex settings remains open. \RV{Empirically, the method still works once the iterate enters locally convex basins.} We demonstrate the success of A$^2$GD on a composite $\ell_{1\text{-}2}$ problem, where the nonconvex regularizer admits a closed-form proximal operator.

Although our line-search reduced methods adds little practical overhead, we do not yet have a nontrivial upper bound on the total number of activations of line search as variability in local curvature for general convex functions can produce irregular triggering patterns. \RV{Empirically, typically fewer than $10$, and almost all activations occur in the early phase.}

Another extension is the stochastic setting. Developing a stochastic variant of A$^2$GD that preserves both adaptivity and acceleration under variance conditions would broaden applicability to large-scale machine learning, providing a step toward a theoretical justification of the empirical success of Adam.

\paragraph{Background on convex functions}
Let $f:~\mathbb{R}^d \to \mathbb{R}$ be differentiable. The Bregman divergence between $x, y \in \mathbb{R}^d$ is defined as 
\[
D_f(y, x) := f(y) - f(x) - \langle \nabla f(x), y - x \rangle.
\]
The function $f$ is $\mu$-strongly convex if for some $\mu > 0$,
\[
D_f(y, x) \geq \frac{\mu}{2} \|y - x\|^2, \quad \forall x, y \in \mathbb{R}^d.
\]
It is $L$-smooth, for some $L> 0$, if its gradient is $L$-Lipschitz:
\[
\|\nabla f(y) - \nabla f(x)\| \leq L \|y - x\|, \quad \forall x, y \in \mathbb{R}^d.
\]
The condition number is defined by $\kappa = L / \mu$. Let $\mathcal{S}_{L,\mu}$ denote the class of all differentiable functions that are both $\mu$-strongly convex and $L$-smooth.

For $f \in \mathcal{S}_{L,\mu}$, the Bregman divergence satisfies~\citep{nesterov2003introductory}:
\begin{equation}\label{eq:cocov}
\frac{1}{2L} \|\nabla f(x) - \nabla f(y)\|^2 \leq D_f(x, y) \leq \frac{1}{2\mu} \|\nabla f(x) - \nabla f(y)\|^2, \quad \forall x, y \in \mathbb{R}^d.
\end{equation}
Taking $y = x^*$, where $x^*$ minimizes $f$ and $\nabla f(x^*) = 0$, yields:
\begin{equation}\label{eq:gE}
\|\nabla f(x)\|^2 \geq 2\mu (f(x) - f(x^*)), \quad \forall x \in \mathbb{R}^d.
\end{equation}

\section{Adaptive Gradient Descent Method}

We illustrate our main idea using gradient descent method (\ref{eq:GD}) and later extend it to accelerated gradient descent.  
The steepest descent step chooses  
\begin{equation}\label{eq:exact}
\alpha_k^* = \arg\min_{\alpha > 0} f(x_k - \alpha \nabla f(x_k)),
\end{equation}  
which entails solving a one-dimensional convex problem. While conceptually simple, this can be costly unless a closed form is available.  

For $L$-smooth functions, the fixed step size $\alpha_k = 1/L$ guarantees convergence, but is often overly conservative when local curvature is much smaller than $L$. To improve efficiency, we design step sizes that adapt to local geometry using $f(x_k)$ and $\nabla f(x_k)$.  

We shall estimate the local Lipschitz constant $L_k$ through Lyapunov analysis of the gradient descent method (\ref{eq:GD}). Consider the Lyapunov function 
\begin{equation}\label{eq:simpleLya}
    E_k = f(x_k) - f(x^{\star}),
\end{equation}
where $x^{\star} \in \arg\min f(x)$ and $f(x^{\star}) = \min f$. Expanding $f$ at $x_{k+1}$ gives
\begin{align*}
    E_{k+1} - E_k 
    &= f(x_{k+1}) - f(x_k) = \dual{\gf(x_{k+1}), x_{k+1} - x_k} - D_f(x_k, x_{k+1}) \\
    &= -\alpha_k \dual{\gf(x_{k+1}), \gf(x_k)} - D_f(x_k, x_{k+1}) \\
    &= -\frac{\alpha_k}{2}\|\gf(x_{k+1})\|^2 - \frac{\alpha_k}{2}\|\gf(x_k)\|^2 \\
    &\quad + \frac{\alpha_k}{2}\|\gf(x_{k+1}) - \gf(x_k)\|^2 - D_f(x_k, x_{k+1}).
\end{align*}
Applying (\ref{eq:gE}) to $\|\gf(x_{k+1})\|^2$ and rearranging yields
\begin{equation}\label{eq:basicanalysis}
\begin{aligned}
\left(1 + \mu \alpha_k\right) E_{k+1} 
\leq E_k - \frac{\alpha_k}{2}\|\gf(x_k)\|^2 
+ \frac{\alpha_k}{2}\|\gf(x_{k+1}) - \gf(x_k)\|^2
- D_f(x_k, x_{k+1}).
\end{aligned}
\end{equation}

If we use a line search to choose a small enough $\alpha_k$ such that
\begin{equation}\label{eq:linesearch}
\alpha_k = \frac{1}{L_k} \leq \frac{2D_f(x_k, x_{k+1})}{\|\gf(x_{k+1}) - \gf(x_k)\|^2},
\end{equation}
then dropping the negative terms in (\ref{eq:basicanalysis}) gives the linear convergence
\[
E_{k+1} \leq (1 + \mu/L_k)^{-1} E_k.
\]

Since $\alpha_k = 1/L_k$, choosing a smaller $\alpha_k$ is equivalent to using a larger $L_k$. By (\ref{eq:cocov}), the criterion (\ref{eq:linesearch}) holds once $L_k \geq L$. Standard backtracking starts with an initial estimate of $L_k$ and increases it iteratively by a factor $r > 1$ until (\ref{eq:linesearch}) is satisfied. This procedure requires at most $\mathcal{O}\!\left(\left\lceil \log L/\log r \right\rceil\right)$ iterations. A more adaptive and efficient backtracking scheme was recently proposed in~\cite{cavalcanti2025adaptive}, which we adapt for our purposes and briefly recall below.

Rewriting the stopping criterion (\ref{eq:linesearch}) gives  
\begin{equation*}
    v = \frac{2L_k D_f(x_k, x_{k+1})}{\|\nabla f(x_{k+1}) - \nabla f(x_k)\|^2} \geq 1.
\end{equation*}  
If $v < 1$, the criterion is not satisfied. Instead of increasing $L_k$ by a fixed ratio, we update it as $L_k \gets rL_k / v$, where $r > 1$ is a base ratio (e.g., $r=3$). This adaptive scaling adjusts to the gap between the current condition and the stopping criterion, improving both efficiency and accuracy.  

Even with adaptive backtracking, line search introduces overhead because each update of $L_k$ requires a new evaluation of $\nabla f(x_{k+1})$, \RV{often the dominant cost in gradient-based methods}, and sometimes also $f(x_{k+1})$. To reduce this, recent work has increasingly focused on line-search–free adaptive schemes; see the literature review in the introduction.

Enforcing line-search free updates is often too rigid and restrictive. In contrast, we reduce the number of line-search steps, achieving comparable cost to line-search free methods. Our approach is inspired by stability analysis in ODE solvers. The following result can be easily established by induction.

\begin{lemma}[A variant of Lemma 5.7.1. in \cite{Gautschi:2011Numerical}] \label{lm:cumsum}
Let $\left\{E_k\right\}$ be a positive sequence satisfying  
$$
E_{k+1} \leq \delta_k (E_k+b_k), \quad k=0,1, \ldots, 
$$  
where $\delta_k>0$ and $b_k \in \mathbb{R}$. Then  
$$
\begin{aligned}
E_{k+1} &\leq \left(\prod_{i=0}^{k} \delta_i\right) E_0+p_k, \quad k=0,1, \ldots ,
\end{aligned}
$$
where the accumlated perturbation
$$
p_k=\sum_{i=0}^k \left (\prod_{j=i}^k \delta_j \right ) b_i, \quad \text{ satisfying } \quad p_k = \delta_k (p_{k-1} + b_k).
$$
\end{lemma}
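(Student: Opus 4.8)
The plan is to prove both assertions — the explicit bound on $E_{k+1}$ and the equivalent recursive description $p_k = \delta_k(p_{k-1}+b_k)$ — by a single induction on $k$, using the hypothesis $\delta_k > 0$ only to preserve the direction of the given inequality when it is multiplied through. This is the discrete analogue of a variation-of-constants (Gr\"onwall-type) argument: the homogeneous part contributes $\big(\prod_{i=0}^k \delta_i\big)E_0$ and the inhomogeneous forcing $b_i$ accumulates into $p_k$. Before running the induction it is cleanest to first establish the recurrence for $p_k$ directly from its closed-form definition $p_k = \sum_{i=0}^k \big(\prod_{j=i}^k \delta_j\big) b_i$, since that recurrence is exactly what the inductive step will consume.

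To verify the recurrence, I would split off the $i=k$ term of the sum, which equals $\delta_k b_k$, and factor $\delta_k$ out of each remaining product via $\prod_{j=i}^k \delta_j = \delta_k \prod_{j=i}^{k-1}\delta_j$ for $i \leq k-1$. The leftover sum is then precisely $\delta_k p_{k-1}$, giving $p_k = \delta_k b_k + \delta_k p_{k-1} = \delta_k(p_{k-1}+b_k)$, with the natural convention $p_{-1}=0$ so that $p_0=\delta_0 b_0$. This index bookkeeping — keeping the product limits straight while peeling off the top index — is the only place where care is genuinely needed, and it is the part I would treat as the main (if modest) obstacle; everything else is mechanical.

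With the recurrence in hand the induction is immediate. For the base case $k=0$, the hypothesis gives $E_1 \leq \delta_0(E_0+b_0) = \delta_0 E_0 + p_0$, matching the claim since $\prod_{i=0}^0 \delta_i = \delta_0$ and $p_0 = \delta_0 b_0$. For the inductive step, assume $E_k \leq \big(\prod_{i=0}^{k-1}\delta_i\big)E_0 + p_{k-1}$. Applying $E_{k+1}\leq \delta_k(E_k+b_k)$ and using $\delta_k>0$ to multiply the induction hypothesis through, I obtain
$$
E_{k+1} \leq \delta_k\Big(\big(\textstyle\prod_{i=0}^{k-1}\delta_i\big)E_0 + p_{k-1} + b_k\Big) = \big(\textstyle\prod_{i=0}^{k}\delta_i\big)E_0 + \delta_k(p_{k-1}+b_k).
$$
Substituting $\delta_k(p_{k-1}+b_k)=p_k$ from the recurrence closes the step and hence the induction. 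One subtlety worth flagging explicitly in the write-up: the positivity of $\{E_k\}$ is never actually used in the algebra, and $b_k$ is allowed to be negative; the essential structural assumption is $\delta_k>0$, which is what guarantees that multiplying an inequality by $\delta_k$ keeps its direction intact.
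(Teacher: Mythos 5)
Your proof is correct and takes exactly the route the paper intends: the paper omits the details and simply remarks that the lemma ``can be easily established by induction,'' and your argument---first verifying the recurrence $p_k=\delta_k(p_{k-1}+b_k)$ from the closed form, then running the induction with $\delta_k>0$ preserving the inequality---is precisely that induction, carried out carefully. Nothing is missing.
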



We use an adaptive gradient descent method (ad-GD) to illustrate our main idea and refer to Appendix~A for the detailed algorithmic formulation. Applying Lemma~\ref{lm:cumsum} to GD under the Lyapunov analysis~(\ref{eq:basicanalysis}) gives
$$
\begin{aligned}
\delta_k &= (1+\mu/L_k)^{-1}, \quad b_k = b_k^{(1)}+b_k^{(2)}, \quad \text{where }\\
b_k^{(1)} &= \frac{1}{2L_k}\|\nabla f(x_{k+1})-\nabla f(x_k)\|^2 - D_f(x_k,x_{k+1}), \qquad b_k^{(2)} = -\frac{1}{2L_k}\|\nabla f(x_{k})\|^2.
\end{aligned}
$$ 

In the line-search criterion~(\ref{eq:linesearch}), $L_k$ is selected so that $b^{(1)}_k<0$, ensuring $b_k<0$ at each step. This pointwise condition is sufficient but not necessary. Instead, we activate line search only when $p_k>0$ and increase $L_k$ until $p_k\le 0$. Classical line search enforces $b_k<0$ in an $\ell_\infty$ sense, while our approach permits a weighted \RV{$\ell_1$} control. Early in the iteration, when $\|\nabla f(x_k)\|$ is large, the negative terms $b_k^{(2)}$ accumulate and offset later positives, reducing activations. Once $p_k\le 0$, exponential decay follows:
\[
E_{k+1}\le \prod_{i=0}^k \Bigl(1+\frac{\mu}{L_i}\Bigr)^{-1} E_0.
\]
Figures~\ref{fig:lyapunov_values} and~\ref{fig:compare_adGD_adGDwithlinesearch} illustrate this behavior.

\vspace{-2 pt}
\begin{minipage}[htdp]{0.485\textwidth}
    \centering
    \includegraphics[width=0.8\linewidth]{./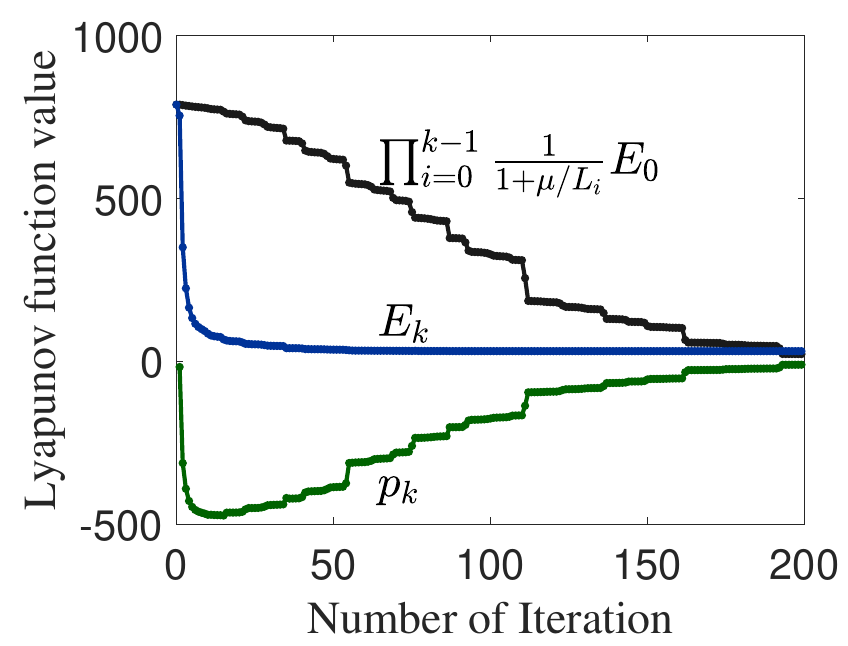}
    \captionsetup{hypcap=false}
    \captionof{figure}{The accumulated perturbation $p_k$ (green) stays negative and approaches zero. The Lyapunov values $E_k$ (blue) decay faster than the theoretical exponential rate $\left(\prod_{i=0}^{k}\delta_i\right)E_0$ (black). In the early iterations, $E_k$ decreases even more rapidly due to the large negative term $b_k^{(2)} = -\tfrac{1}{2L_k}\|\nabla f(x_k)\|^2$.}      \label{fig:lyapunov_values}
\end{minipage}%
\hfill
\begin{minipage}[htdp]{0.48\textwidth}
    \centering
    \includegraphics[width=0.8\linewidth]{./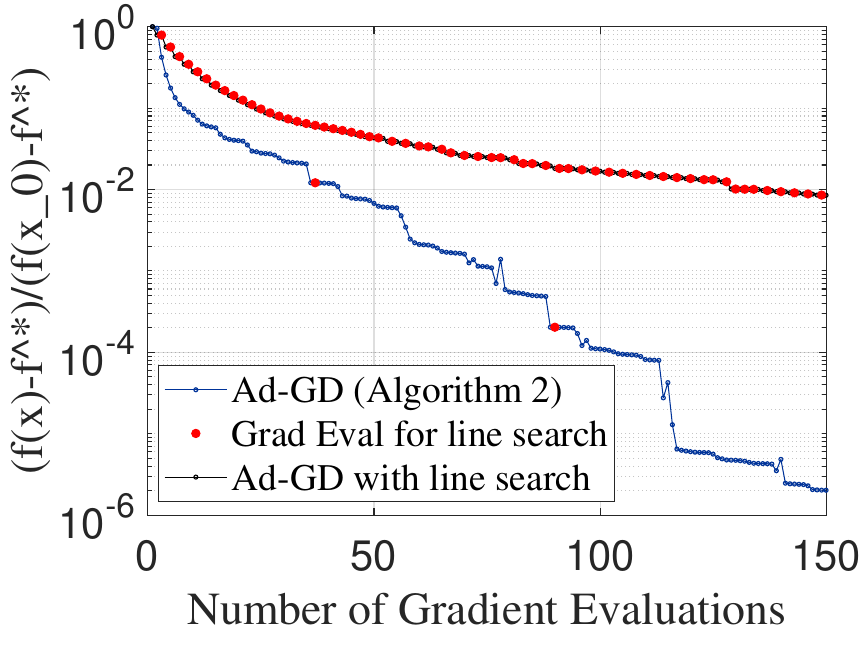}
    \captionsetup{hypcap=false}
    \captionof{figure}{For a logistic regression problem (\ref{eq:logreg}), gradient descent with line search enforcing $b_k^{(1)} \le 0$ (top curve) triggers backtracking every $3$--$4$ iterations on average. In contrast, ad-GD, which performs line search only when $p_k>0$, requires far fewer activations (bottom curve). Red dots mark iterations where line search is triggered.
} 
    \label{fig:compare_adGD_adGDwithlinesearch}
\end{minipage}

\vspace{2 pt}
\begin{theorem} \label{thm:agd_conv}
Assume $f\in \mathcal S_{L,\mu}$. Let $\{x_{k}\}$ be the sequence generated by gradient descent method (\ref{eq:GD}) with line search ensuring $p_k \leq 0$. Then we have
$$E_{k}\leq \prod_{i=0}^{k-1} \frac{1}{1+\mu/L_i}E_0 \leq \left (\frac{1}{1+ \mu/(c_rL)}\right )^k E_0.$$
\end{theorem}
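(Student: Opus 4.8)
The plan is to feed the one-step Lyapunov inequality (\ref{eq:basicanalysis}) into the accumulation Lemma~\ref{lm:cumsum} and then to bound the adaptively chosen $L_i$ uniformly from above by a fixed multiple of the true smoothness constant $L$; the two inequalities in the statement correspond exactly to these two ingredients.

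For the first inequality, I would take $\alpha_k=1/L_k$ in (\ref{eq:basicanalysis}) and read it as $E_{k+1}\le\delta_k(E_k+b_k)$ with $\delta_k=(1+\mu/L_k)^{-1}$ and $b_k=b_k^{(1)}+b_k^{(2)}$ as defined before the statement. Lemma~\ref{lm:cumsum} then gives $E_k\le\bigl(\prod_{i=0}^{k-1}\delta_i\bigr)E_0+p_{k-1}$, where $p_{k-1}$ is the accumulated perturbation. The standing hypothesis of the theorem is precisely that the line search enforces $p_k\le 0$ at every step, so discarding the nonpositive term $p_{k-1}$ yields $E_k\le\prod_{i=0}^{k-1}(1+\mu/L_i)^{-1}E_0$.

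For the second inequality I would prove the uniform bound $L_i\le c_rL$ with $c_r=r$. The engine is the cocoercivity estimate (\ref{eq:cocov}), which guarantees $D_f(x_k,x_{k+1})\ge\frac{1}{2L}\|\nabla f(x_{k+1})-\nabla f(x_k)\|^2$ for every trial point, so the ratio $\phi:=\|\nabla f(x_{k+1})-\nabla f(x_k)\|^2/\bigl(2D_f(x_k,x_{k+1})\bigr)$ never exceeds $L$. Hence each adaptive update $L_k\gets rL_k/v=r\phi$ lands at a value at most $rL$, and once $L_k\ge L$ one has $b_k^{(1)}\le 0$; since $b_k^{(2)}\le 0$ always, this forces $b_k\le 0$, and combined with the inductive hypothesis $p_{k-1}\le 0$ it gives $p_k=\delta_k(p_{k-1}+b_k)\le 0$, terminating the search. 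Thus the search never needs to raise $L_i$ above $rL$, and warm-started values inherited from earlier iterations already satisfy this cap, so $L_i\le rL$ for all $i$. Because $L\mapsto(1+\mu/L)^{-1}$ is increasing, each factor obeys $(1+\mu/L_i)^{-1}\le(1+\mu/(rL))^{-1}$, and multiplying the $k$ factors gives the claimed bound with $c_r=r$.

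The main obstacle is the uniform step-size cap, not the telescoping. One must argue that replacing the pointwise stopping rule $b_k^{(1)}\le 0$ by the weaker accumulated rule $p_k\le 0$ can only stop the search earlier (at a smaller $L_k$), so the worst-case cap is still governed by the pointwise criterion and the single backtracking overshoot factor $r$. Verifying that each adaptive update indeed terminates at or below $rL$ — including the base case $p_0=\delta_0 b_0$ and the monotone increase of $L_k$ across repeated updates — is the delicate part; the concluding monotonicity and product estimate are routine.
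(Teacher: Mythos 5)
Your proposal is correct and follows essentially the same route as the paper: the first inequality comes from feeding (\ref{eq:basicanalysis}) into Lemma~\ref{lm:cumsum} and discarding the nonpositive accumulated perturbation $p_{k-1}$, and the second comes from capping $L_i\le c_rL$ via the cocoercivity bound (\ref{eq:cocov}) applied to the backtracking update. The paper states this in three sentences and leaves $c_r$ abstract ("depends on the line-search scaling factor"); your write-up merely fills in the details, correctly identifying $c_r=r$ from the adaptive update $L_k\gets rL_k/v$ and verifying that the accumulated rule $p_k\le 0$ terminates no later than the pointwise rule $b_k^{(1)}\le 0$.
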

\begin{proof}
As $p_k \leq 0$ for all $k$, linear convergence follows from~(\ref{eq:basicanalysis}). By~(\ref{eq:cocov}), the stopping criterion~(\ref{eq:linesearch}) is satisfied once $L_k \leq c_r L$ with at most $O(|\log L/\log r|)$ search steps, where $c_r \geq 1$ depends on the line-search scaling factor. Since $\mu_k \geq \mu$, the desired linear convergence rate follows.
\end{proof}

\begin{remark}\rm 
To improve efficiency, we set the next step size as  
$
\alpha_{k+1} = \frac{2D_f(x_k, x_{k+1})}{\|\nabla f(x_{k+1}) - \nabla f(x_k)\|^2}.
$
The gradient $\nabla f(x_{k+1})$ can be reused in the following gradient descent step. However, computing $D_f(x_k, x_{k+1})$ requires function evaluations $f(x_k)$ and $f(x_{k+1})$, which may be costly.  
To avoid these evaluations, we approximate $2D_f(x_k, x_{k+1})$ by its symmetrized form:
\[
2D_f(x_k, x_{k+1}) \approx D_f(x_k, x_{k+1}) + D_f(x_{k+1}, x_k)
= \langle \nabla f(x_{k+1}) - \nabla f(x_k),\, x_{k+1} - x_k \rangle.
\]
This reduces the ratio to the form used in the BB gradient method~(\ref{eq:BB}). In contrast to BB, convergence of ad-GD is guaranteed by enforcing $p_k \leq 0$.
\end{remark}

\begin{remark}\rm
There are several variants depending on how we define $\delta_k$ and split $b_k^{(1)}$ and $b_k^{(2)}$. For example, we can use $\delta_k = 1- \mu/L_k$, $b_k^{(2)} = 0$, and the rest is $b_k^{(1)}$. 
Then $b_k^{(1)} \leq 0 $ is equivalent to the criteria proposed by  
 \citep{Nesterov2012GradientMF}  (Appendix A). 
\end{remark}

\section{Adaptive Accelerated Gradient Descent Method}
In this section, we apply our adaptive strategy to accelerated gradient methods. We derive an identity for the difference of the Lyapunov function and adaptively adjust $L_k$ and $\mu_k$ to ensure the accumulated perturbation is non-positive. 

We will use the Hessian-based Nesterov accelerated gradient (HNAG) flow proposed in \cite{chen2019orderoptimizationmethodsbased}
\begin{equation}\label{eq:Hagf-intro}
	\left\{
	\begin{aligned}
		x' = {}&y-x-\beta\nabla f(x),\\
		y'={}&x - y -\frac{1}{\mu}\nabla f(x),
	\end{aligned}
	\right.
\end{equation}
where $\beta$ is a positive parameter. An implicit and explicit (IMEX) discretization of (\ref{eq:Hagf-intro}) is
\begin{equation}\label{eq:ex-HNAG}
	\left\{
	\begin{aligned}
		x_{k+1}-x_{k}={}& \alpha_k \left (y_{k}-x_{k+1}\right ) - \frac{1}{L_k}\nabla f(x_{k}),\\
 y_{k+1}-y_{k}={}&
		-\frac{\alpha_k}{\mu_k}\nabla f(x_{k+1}) + \alpha_k \left ( x_{k+1} - y_{k+1}\right ),
	\end{aligned}
	\right.
\end{equation}
where $\alpha_k>0$ is the time step size and $L_k := (\alpha_k\beta_k)^{-1}$. 
Denote by $\bs z=(x, y)^{\intercal}$. Introduce the Lyapunov function
$$
\mathcal E(\bs z; \mu):=f(x)-f(x^{*})+\frac{\mu}{2}\nm{y-x^{*}}^2.
$$
The proof of the following identity can be found in Appendix B. 
\begin{lemma}\label{lem:identityHNAG}We have the identity
 	\begin{equation*}
		\begin{split}
&(1+\alpha_k) \mathcal E(\bs z_{k+1}; \mu_{k}) - \mathcal E(\bs z_k; \mu_k) 
			\\
= {} & \frac{1}{2}\left ( \frac{\alpha_k^2}{\mu_k}  - \frac{1}{L_k}\right )\nm{\nabla f(x_{k+1})}_*^2 \quad ({\rm I})\\
 +& \frac{1}{2L_k}\| \nabla f(x_{k+1}) - \nabla f(x_k) \|^2 - D_f(x_{k}, x_{k+1}) \quad ({\rm II}) \\
-&\frac{1}{2L_k}\nm{\nabla f(x_k)}_*^2+ \frac{\alpha_k\mu_k}{2}\left (\nm{x_{k+1}-x^{*}}^2 - \frac{1}{\mu_k} D_f(x^{*}, x_{k+1}) - (1+\alpha_k)\nm{x_{k+1}-y_{k+1}}^2\right ) \ ({\rm III}).
		\end{split}
	\end{equation*}
\end{lemma}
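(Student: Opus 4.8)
The plan is to prove the identity by a direct, purely algebraic expansion of the weighted Lyapunov difference; since the statement is an equality, every step is an exact rewriting and no inequality (convexity, smoothness) is invoked. I would begin by splitting the left-hand side into an objective part and a momentum part,
\[
(1+\alpha_k)\mathcal E(\bs z_{k+1};\mu_k) - \mathcal E(\bs z_k;\mu_k)
= \big[f(x_{k+1})-f(x_k)\big] + \alpha_k\big[f(x_{k+1})-f(x^{*})\big]
+ \tfrac{\mu_k}{2}\big[(1+\alpha_k)\nm{y_{k+1}-x^{*}}^2 - \nm{y_k-x^{*}}^2\big],
\]
and then treat the two brackets separately, expressing everything at the end in the common basis $\{x_{k+1}-x^{*},\, x_{k+1}-y_{k+1},\, \nabla f(x_{k+1}),\, \nabla f(x_k)\}$ so that cancellations become transparent.

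For the objective part I would convert the two function differences into gradient inner products plus Bregman divergences, using $f(x_{k+1})-f(x_k)=\dual{\nabla f(x_{k+1}),x_{k+1}-x_k}-D_f(x_k,x_{k+1})$ and $f(x_{k+1})-f(x^{*})=\dual{\nabla f(x_{k+1}),x_{k+1}-x^{*}}-D_f(x^{*},x_{k+1})$. Substituting the explicit $x$-update from (\ref{eq:ex-HNAG}), i.e.\ $x_{k+1}-x_k=\alpha_k(y_k-x_{k+1})-\tfrac{1}{L_k}\nabla f(x_k)$, produces the cross term $-\tfrac{1}{L_k}\dual{\nabla f(x_{k+1}),\nabla f(x_k)}$ together with inner products of $\nabla f(x_{k+1})$ against $y_k-x_{k+1}$ and against $x_{k+1}-x^{*}$, plus the two Bregman terms $-D_f(x_k,x_{k+1})$ and $-\alpha_k D_f(x^{*},x_{k+1})$.

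For the momentum part the difficulty is that the $y$-update is implicit in $y_{k+1}$. I would first rearrange the second line of (\ref{eq:ex-HNAG}) into $(1+\alpha_k)(y_{k+1}-x^{*})-(y_k-x^{*})=-\tfrac{\alpha_k}{\mu_k}\nabla f(x_{k+1})+\alpha_k(x_{k+1}-x^{*})$, using $x_{k+1}-y_{k+1}=(x_{k+1}-x^{*})-(y_{k+1}-x^{*})$. Expanding $(1+\alpha_k)\nm{y_{k+1}-x^{*}}^2-\nm{y_k-x^{*}}^2$ through this relation and re-expressing the result in terms of $x_{k+1}-x^{*}$, $x_{k+1}-y_{k+1}$ and $\nabla f(x_{k+1})$ yields the squared norms $\nm{\nabla f(x_{k+1})}^2$, $\nm{x_{k+1}-x^{*}}^2$, $\nm{x_{k+1}-y_{k+1}}^2$ and cross terms $\dual{\nabla f(x_{k+1}),x_{k+1}-x^{*}}$ and $\dual{\nabla f(x_{k+1}),x_{k+1}-y_{k+1}}$, weighted by $\tfrac{\mu_k}{2}$.

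The decisive step is adding the two parts: all inner products pairing $\nabla f(x_{k+1})$ with $x_{k+1}-x^{*}$ and with $x_{k+1}-y_{k+1}$ cancel between the objective and momentum contributions, leaving only squared norms and the two Bregman divergences. I would then collect the coefficient of $\nm{\nabla f(x_{k+1})}^2$ to obtain (I); complete the square on $-\tfrac{1}{L_k}\dual{\nabla f(x_{k+1}),\nabla f(x_k)}$ together with the surviving fractions of $\nm{\nabla f(x_{k+1})}^2$ and $\nm{\nabla f(x_k)}^2$ to form $\tfrac{1}{2L_k}\nm{\nabla f(x_{k+1})-\nabla f(x_k)}^2-D_f(x_k,x_{k+1})$ in (II); and gather the remaining distance and Bregman-at-$x^{*}$ terms, with the $-(1+\alpha_k)\nm{x_{k+1}-y_{k+1}}^2$ contribution, into (III). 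I expect the main obstacle to be organizational rather than conceptual: carefully propagating the several inner-product cross terms through the implicit $y$-update so that they cancel exactly, and correctly splitting the three gradient-norm contributions so that the perfect square in (II) emerges with the right constants.
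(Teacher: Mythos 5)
Your plan is correct: the statement is an exact identity, and a direct algebraic expansion along the lines you describe does verify it. Your organization differs only cosmetically from the paper's. You split the left-hand side into an objective part and a momentum part and expand each from scratch, whereas the paper first writes $\mathcal E(\bs z_{k+1};\mu_k)-\mathcal E(\bs z_k;\mu_k)=\langle\nabla\mathcal E(\bs z_{k+1};\mu_k),\bs z_{k+1}-\bs z_k\rangle-D_{\mathcal E}(\bs z_k,\bs z_{k+1};\mu_k)$, decomposes the increment as $\alpha_k\mathcal G(\bs z_{k+1},\mu_k)$ (implicit Euler) plus a correction, reuses the continuous-level identity for $-\nabla\mathcal E\cdot\mathcal G$ on the first piece, and handles the correction with two completions of squares. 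Both routes funnel into the same residual cross term $\alpha_k\langle\nabla f(x_{k+1}),y_k-y_{k+1}\rangle$; the paper eliminates it by completing the square with weights $\alpha_k/\sqrt{\mu_k}$ and $\sqrt{\mu_k}$ and then invoking the $y$-update, which simultaneously produces the $\frac{\alpha_k^2}{2\mu_k}\nm{\nabla f(x_{k+1})}^2$ piece of (I), cancels the $-\frac{\mu_k}{2}\nm{y_{k+1}-y_k}^2$ Bregman remainder, and yields the $-\frac{\alpha_k^2\mu_k}{2}\nm{x_{k+1}-y_{k+1}}^2$ piece of (III); you should adopt that step, since a naive substitution of the $y$-update into this cross term merely regenerates $\langle\nabla f(x_{k+1}),x_{k+1}-y_{k+1}\rangle$ and does not cancel outright as your proposal suggests. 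One substantive point your expansion will surface: the term $\alpha_k[f(x_{k+1})-f(x^{*})]$ contributes exactly $-\alpha_k D_f(x^{*},x_{k+1})$, which equals $\frac{\alpha_k\mu_k}{2}\cdot\bigl(-\frac{2}{\mu_k}\bigr)D_f(x^{*},x_{k+1})$; the coefficient $\frac{1}{\mu_k}$ in the displayed statement is a typo, and the correct factor $\frac{2}{\mu_k}$ appears in the appendix version of the identity (Lemma~\ref{lem:identityA2GD}).
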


We can simply set
$\alpha_k = \sqrt{\frac{\mu_k}{L_k}}$
so that $({\rm I}) = 0$. To control ({\rm II}) and ({\rm III}), define perturbations
\begin{equation} 
\begin{aligned}
    b_k^{(1)} &= \frac{1}{2L_k}\|\gf(x_{k+1})-\gf(x_k)\|^2-D_f(x_k,x_{k+1}),\\
    b_k^{(2)} &= -\frac{1}{2L_k}\nm{\nabla f(x_k)}^2+ \frac{\alpha_k\mu_k}{2}\left (R_k^2 - (1+\alpha_k)\nm{x_{k+1}-y_{k+1}}^2 \right),\\
p_{k}&=\frac{1}{1+\alpha_k} \left(p_{k-1}+b_k^{(1)}+b_k^{(2)}\right),\quad \forall k\geq 1 \text{ and } p_0 = 0.    
\end{aligned}
\end{equation}

The term $b_k^{(1)}$ measures deviation from the Lipschitz condition and is used to adjust $L_k$, while $b_k^{(2)}$ measures deviation from the strong convexity assumption and is used to adjust $\mu_k$. To enforce the lower bound $\mu_k \geq \mu$ when $\mu > 0$, we introduce
\[
R_k^2 := \left(1 - \mu/\mu_k\right) R^2,
\]
using the inequality $D_f(x^*, x_{k+1}) \geq \tfrac{\mu}{2} \|x_{k+1} - x^*\|^2$ and an upper bound $R$ such that $\|x_{k+1} - x^*\|^2 \leq R^2$. If $\mu_k < \mu$, then $b_k^{(2)} < 0$ and no further reduction of $\mu_k$ is allowed. The parameter $\mu$ can be a conservative estimate of the true convexity constant and serves as a lower bound for $\mu_k$.

Line search is triggered only when $p_k > 0$. If $b_k^{(1)} > 0$, $L_k$ is updated using adaptive backtracking~\cite{cavalcanti2025adaptive}. If $b_k^{(2)} > 0$, the convexity is not strong enough to support a large step, so we reduce $\mu_k$. In the limiting case $\mu_k = 0$, we will have $b_k^{(2)} \leq 0$. 

To update $\mu_k$ more precisely, we solve $b_k^{(2)} = 0$, treating $L_k$ as known and using the fixed rule $\alpha_k = \sqrt{\mu_k / L_k}$ for the step size. The leading term in the second part of $b_k^{(2)}$ is $\alpha_k \mu_k R_k^2 = \mu_k^{3/2} R_k^2 / L_k^{1/2}$, and the equation essentially reduces to a non-trivial scaling
\[
\frac{\mu_k^{3/2} R_k^2}{L_k^{1/2}} \approx \frac{\|\nabla f(x_k)\|^2}{L_k} 
\quad \Rightarrow \quad 
\mu_k \propto \frac{\| \nabla f(x_k) \|^{4/3}}{L_k^{1/3} R_k^{4/3}}.
\]
To preserve decay of the Lyapunov function, we enforce
\[
\mu_{k+1} \leq \mu_k \quad \Rightarrow \quad \mathcal{E}(\bs z_{k+1}; \mu_{k+1}) \leq \mathcal{E}(\bs z_{k+1}; \mu_k).
\]

To establish convergence guarantees, the parameter $\mu_k$ cannot decay too quickly. To control this decay, we follow the perturbation strategy of \cite{chen2025acceleratedgradientmethodsvariable} by introducing a parameter $\varepsilon$ and enforcing the lower bound $\mu_k \ge \varepsilon$. The value of $\varepsilon$ is halved only when certain decay conditions are met. Specifically, $\varepsilon$ is reduced if either $\mathcal{E}_k/\mathcal{E}_0 \le (R^2+1)\varepsilon/2$
or the number of iterations performed with the current $\varepsilon$ exceeds $m$. If $\mu>0$, the condition is reached within $\mathcal{O}(|\log \varepsilon|)$ iterations; if $\mu=0$, the iteration count for a fixed $\varepsilon$ is at most $m$.
Since $\mathcal{E}_k$ is not directly observable, we use the proxy $\|\nabla f(x_k)\|^2/\|\nabla f(x_0)\|^2$ for $\mathcal{E}_k/\mathcal{E}_0$.

\vskip -6pt
\begin{algorithm}[ht]
\caption{Adaptive Accelerated Gradient Method (A$^2$GD)}\label{alg:adaptiveHNAG}
\KwIn{$x_0, y_0 \in \mathbb{R}^n$, $L_0 > 0$, $\mu_0 > 0$, $R > 0$, $0< {\rm tol}\ll 1$, $\varepsilon>0$, $m\geq 1$}
\While{$\|\nabla f(x_k)\|> {\rm tol}\|\nabla f(x_0)\|$}{
    $\alpha_k \gets \sqrt{\mu_k / L_k}$\;
    $x_{k+1} \gets \frac{1}{\alpha_k + 1} x_k + \frac{\alpha_k}{\alpha_k + 1} y_k - \frac{1}{L_k (\alpha_k + 1)} \nabla f(x_k)$\;
    $y_{k+1} \gets \frac{\alpha_k}{\alpha_k + 1} x_{k+1} + \frac{1}{\alpha_k + 1} y_k - \frac{\alpha_k}{\mu_k (\alpha_k + 1)} \nabla f(x_{k+1})$\;
    $b_k^{(1)} \gets \frac{1}{2L_k} \| \nabla f(x_{k+1}) - \nabla f(x_k) \|^2 - D_f(x_k, x_{k+1})$\;
    $b_k^{(2)} \gets -\frac{1}{2L_k} \| \nabla f(x_k) \|_*^2 + \frac{\alpha_k \mu_k}{2} \left(R_k^2 - (1+\alpha_k)\|x_{k+1} - y_{k+1}\|^2 \right)$\;
    $p_k \gets \frac{1}{1+\alpha_k}(p_{k-1} + b_k^{(1)} + b_k^{(2)})$\;
    
    \If{$p_k > 0$}{
            \If{$b_k^{(1)} > 0$}{
                $v\gets \frac{2L_k D_f(x_k,x_{k+1})}{\|\nabla f(x_{k+1}) - \nabla f(x_k)\|^2}$, $L_k\gets 3L_k/v$\;
            }
            \If{$b_k^{(2)} > 0$}{
                        $\mu_k \gets \max\left\{\varepsilon,\min\left\{\mu_k, \frac{\| \nabla f(x_k) \|^{4/3}}{L_k^{1/3} \left(R_k^2 - (1+\alpha_k) \|x_{k+1} - y_{k+1}\|^2\right)^{2/3}} \right\}\right\}$\;
            }
            Go to line 2\;
        }

    \Else{
        $L_{k+1}\gets \frac{\|\nabla f(x_{k+1}) - \nabla f(x_k)\|^2}{2D_f(x_k,x_{k+1})}$\;
        $\mu_{k+1} \gets \max\left\{\varepsilon,\min\left\{\mu_k, \frac{\| \nabla f(x_k) \|^{4/3}}{L_k^{1/3} \left(R_k^2 - (1+\alpha_k) \|x_{k+1} - y_{k+1}\|^2\right)^{2/3}} \right\}\right\}$\;
    }
\If{\textbf{decay condition}}{$\varepsilon\gets \varepsilon/2$\;
$m \gets \lfloor \sqrt{2} \cdot m \rfloor + 1$\;}
$k\gets k+1$\;
}
\end{algorithm}

\vskip -4 pt
To ensure monotonic descent, updates with $f(x_{k+1}) > f(x_k)$ are rejected by setting $x_{k+1}=x_k$. When $\|y_k - x^\star\| \gg \|x_k - x^\star\|$, the Lyapunov function $\mathcal{E}$ may decrease mainly through $\|y_k - x^\star\|^2$, while $f(x_k)$ stagnates. To avoid this, we restart by setting $y_k = x_k$ if $f(x_k)$ fails to decrease for five consecutive iterations. These monitoring steps are omitted from Algorithm~\ref{alg:adaptiveHNAG} but are used in practice to improve stability. \RV{Restarting and accept/reject heuristics reduce oscillations but do not influence the main source of acceleration; see Fig.~\ref{fig:lor_nonadaptive_norestart}.}

To reduce sensitivity to initialization, we include a short warm-up phase using adaptive proximal gradient descent (AdProxGD)~\cite{Malitsky2023AdaptivePG} or ad-GD (Algorithm \ref{alg:adaptiveGD2e} in Appendix A). Starting from $x_0$, we run $10$ AdProxGD iterations and initialize A$^2$GD with $x_0=y_0:=x_{10}$, $\mu_0:=\min_{1\le k\le 10}\{L_k\}$, and $R = 100\,\|\nabla f(x_0)\|/\mu_0$. Although updating $R$ dynamically may help, the method is generally robust with a fixed $R$.
\RV{Our ablation study shows that A$^2$GD remains stable for perturbation up to a factor $1000$, indicating that the warm-up is not essential for good performance; see Fig. \ref{fig:ablation_warmup}.}



\begin{theorem}\label{thm:conv-ex1-ode-NAG}
Let $(x_k, y_k)$ be the iterates generated by the above algorithm. Assume function $f$ is $\mu$-strongly convex with $\mu\geq 0$. Let  $k_s$ be the total number of steps after halving $\varepsilon$ exactly $s$ times, i.e. $\varepsilon = 2^{-s}\varepsilon_0$. 
\setlist[enumerate]{leftmargin=20pt,labelsep=0.6em}
\begin{enumerate}
\item When $\mu=0$, ther exists a constant $C > 0$ so that
$$
\frac{\mathcal{E}_{k_s}}{\mathcal{E}_0} \leq \frac{R^2 + 1}{\left( C k_s + \varepsilon_0^{-1/2} \right)^2} = \mathcal{O}\left( \frac{1}{k_s^2} \right)
$$
So $\mathcal O(\sqrt{1/{\rm tol}})$  iteration steps can acheive $\mathcal{E}_{k_s}/\mathcal{E}_0\leq {\rm tol}$. 

\item When $\mu > 0$, the iteration number to achieve $\mathcal{E}_{k_s}/\mathcal{E}_0 \leq (R^2+1) 2^{-s}\varepsilon_0\leq {\rm tol}$ is bounded by $\mathcal O(\sqrt{L/\mu}\ \ln {\rm tol})$. 
\end{enumerate}
\end{theorem}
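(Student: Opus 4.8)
The plan is to establish a single master contraction inequality and then specialize to the two regimes. Starting from the identity in Lemma~\ref{lem:identityHNAG}, the choice $\alpha_k=\sqrt{\mu_k/L_k}$ annihilates term (I). Term (II) is exactly $b_k^{(1)}$, and term (III) is bounded above by $b_k^{(2)}$ after replacing the unknown $x^\star$-dependent quantity $\|x_{k+1}-x^\star\|^2-\mu_k^{-1}D_f(x^\star,x_{k+1})$ by $R_k^2$, using the strong-convexity lower bound $D_f(x^\star,x_{k+1})\ge\frac{\mu}{2}\|x_{k+1}-x^\star\|^2$ together with $\|x_{k+1}-x^\star\|^2\le R^2$. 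This yields $(1+\alpha_k)\mathcal E(\bs z_{k+1};\mu_k)\le \mathcal E(\bs z_k;\mu_k)+b_k^{(1)}+b_k^{(2)}$, and the monotonicity $\mu_{k+1}\le\mu_k$ (enforced by the $\min$ in the update) gives $\mathcal E(\bs z_{k+1};\mu_{k+1})\le\mathcal E(\bs z_{k+1};\mu_k)$, since $\mathcal E$ increases in its second argument. Writing $\mathcal E_k:=\mathcal E(\bs z_k;\mu_k)$, $\delta_k=(1+\alpha_k)^{-1}$, and $b_k=b_k^{(1)}+b_k^{(2)}$, Lemma~\ref{lm:cumsum} gives $\mathcal E_k\le(\prod_{i<k}\delta_i)\mathcal E_0+p_{k-1}$; because the algorithm runs line search precisely until $p_k\le 0$, the perturbation drops and we obtain the master bound $\mathcal E_k/\mathcal E_0\le\prod_{i=0}^{k-1}(1+\sqrt{\mu_i/L_i})^{-1}$.

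For the strongly convex case ($\mu>0$) I would use that the $R_k^2$-construction keeps $\mu_i\ge\mu$ for all $i$ (once $\mu_k$ reaches $\mu$ the term $b_k^{(2)}$ turns negative, so no further reduction is performed), while the adaptive backtracking guarantees $L_i\le c_r L$. Hence each factor satisfies $(1+\sqrt{\mu_i/L_i})^{-1}\le(1+\sqrt{\mu/(c_r L)})^{-1}$, giving linear convergence at rate $1-\Theta(\sqrt{\mu/L})$. Solving $(1+\sqrt{\mu/(c_r L)})^{-k}\le \mathrm{tol}/(R^2+1)$ with $\ln(1+t)\ge t/2$ for $t\in[0,1]$ yields the complexity $\mathcal O(\sqrt{L/\mu}\,|\ln \mathrm{tol}|)$.

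The $\mu=0$ case is the substantive part, argued stage by stage in $s$ (the value $\varepsilon=2^{-s}\varepsilon_0$). Within stage $s$ the bound $\mu_i\ge\varepsilon_s$ forces $\alpha_i\ge a_s:=\sqrt{\varepsilon_s/(c_r L)}$, so the master bound contracts the ratio by at least $(1+a_s)^{-n_s}$ over the $n_s\le m_s$ steps of the stage. The key estimate is that $m_s$ steps halve the ratio: the rule $m\gets\lfloor\sqrt2\,m\rfloor+1$ gives $m_s\approx m_0(\sqrt2)^s$ while $a_s=2^{-s/2}\sqrt{\varepsilon_0/(c_r L)}$, so $m_s a_s$ is essentially constant in $s$, and for $m_0$ large relative to $\sqrt{L/\varepsilon_0}$ one gets $(1+a_s)^{-m_s}\le\tfrac12$. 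Combined with the decay condition $\mathcal E_k/\mathcal E_0\le(R^2+1)\varepsilon/2$, induction on $s$ maintains the invariant $\mathcal E_{k_s}/\mathcal E_0\le(R^2+1)2^{-s}\varepsilon_0$ at the end of each stage, whichever stopping condition fires first. Finally, summing the geometric-type recursion for $m_t$ gives $k_s=\sum_{t<s}n_t\le C_2(2^{s/2}-1)$; inverting to bound $2^{-s}$ and substituting into the invariant converts the rate to $\mathcal E_{k_s}/\mathcal E_0\le(R^2+1)/(Ck_s+\varepsilon_0^{-1/2})^2=\mathcal O(1/k_s^2)$, with $C$ absorbing $m_0,c_r,L,\varepsilon_0$, and setting the right-hand side $\le\mathrm{tol}$ gives the $\mathcal O(\sqrt{1/\mathrm{tol}})$ count.

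I expect the main obstacle to be the per-stage halving guarantee in the $\mu=0$ analysis, namely certifying $(1+a_s)^{-m_s}\le\tfrac12$ uniformly in $s$. This is where the two adaptivity mechanisms interact most delicately: the argument needs $\mu_i\ge\varepsilon_s$ genuinely in force throughout the stage, needs $L_i\le c_r L$ from the \emph{reduced} line search even on steps where $p_k\le0$ and no search is performed, and needs the growth of $m$ to track the decay of $\varepsilon$ with no drift in the accumulated constant $m_s a_s$. A secondary subtlety is the branch where the iteration cap, rather than the Lyapunov condition, triggers halving, since the invariant must then be re-derived from the contraction estimate rather than read off directly; keeping $C$ independent of $s$ across both routes is the part requiring the most care.
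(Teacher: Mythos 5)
Your overall architecture --- the Lyapunov identity with $\alpha_k=\sqrt{\mu_k/L_k}$ killing term (I), accumulated-perturbation control via Lemma~\ref{lm:cumsum}, a direct linear rate for $\mu>0$, and a stage-by-stage $\varepsilon$-halving argument for $\mu=0$ --- matches the paper's, and your $\mu>0$ case is essentially the one given in the appendix. The genuine gap is in your ``master bound'': you claim the line search always restores $p_k\le 0$ with the \emph{full} $b_k^{(2)}$ (including the $+\tfrac{\alpha_k\mu_k}{2}R_k^2$ term), and hence that $\mathcal E_k/\mathcal E_0\le\prod_{i<k}(1+\sqrt{\mu_i/L_i})^{-1}$ with no additive remainder. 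This cannot hold in the $\mu=0$ regime: once $\mu_k$ hits the floor $\varepsilon$, the algorithm has no remaining knob to make $b_k^{(2)}$ negative when $\|\nabla f(x_k)\|$ becomes small relative to $\alpha_k\mu_k R^2$, so $p_k\le 0$ is not enforceable there; and if your pure geometric bound did hold for a fixed $\varepsilon>0$, it would give linear convergence for merely convex $f$, which is impossible. The paper's Lemma~\ref{lem:single_inner_iter_conv} is formulated precisely to avoid this: the $R^2$ contribution is kept outside the controlled perturbation and summed as a geometric series, yielding linear decay \emph{up to the additive floor} $\tfrac{\varepsilon}{2}R^2$. Your per-stage contraction must therefore read $\mathcal E_{k_{s+1}}\le(1+a_s)^{-n_s}\,\mathcal E_{k_s}+\tfrac{\varepsilon_s}{2}R^2$, and the decay condition $\mathcal E_k/\mathcal E_0\le(R^2+1)\varepsilon/2$ exists exactly because progress stalls at that floor; the constants in your invariant $\mathcal E_{k_s}/\mathcal E_0\le(R^2+1)2^{-s}\varepsilon_0$ have to be rebalanced to absorb the floor, which your ``halve the ratio in $m_s$ steps'' step as written does not do.

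Two smaller points of comparison. The paper does not actually carry out the $\mu=0$ stage bookkeeping you sketch: after proving Lemma~\ref{lem:single_inner_iter_conv} it invokes \citep[Theorem~8.4]{chen2025acceleratedgradientmethodsvariable} verbatim, so your explicit $m_s a_s\approx\mathrm{const}$ and $k_s\lesssim 2^{s/2}$ computation is a reasonable reconstruction of what that citation supplies, but it must be run with the ``plus floor'' recursion above (your own worry about the cap-triggered branch is a symptom of this). In the $\mu>0$ case, your claim that $\mu_i\ge\mu$ for all $i$ is slightly too strong --- the update can overshoot below $\mu$ before $R_k^2$ turns negative and shuts off further reduction --- and the paper settles for the weaker, sufficient conclusion $\mu_k\ge\mu/2$ in the terminal stage, which yields the same $\mathcal O\bigl(\sqrt{L/\mu}\,\ln(1/\mathrm{tol})\bigr)$ bound.
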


\section{Numerical Experiments}
We test A$^2$GD on smooth convex minimization tasks and compare it with several leading first-order methods, grouped into two categories: 
\begin{itemize}[leftmargin = 12pt, topsep=0pt, itemsep=0pt]
    \item \textbf{Accelerated but non-adaptive methods:} Nesterov’s accelerated gradient (NAG) with step size $1/(k+3)$~\citep{nesterov1983method}, accelerated over-relaxation heavy ball (AOR-HB)~\citep{wei2024accelerated}, and the triple momentum method (TM)~\citep{VanScoyFreemanLynch2018}. 
    \item \textbf{Adaptive methods:} adaptive proximal gradient descent (AdProxGD)~\citep{Malitsky2023AdaptivePG}, the Accelerated Adaptive Gradient Method (AcceleGrad)~\citep{levy2018online}
    , and NAGfree~\citep{cavalcanti2025adaptive}.
\end{itemize}

For all examples, we set the tolerance to $\mathrm{tol} = 10^{-6}$ and use the stopping criterion $\|\nabla f(x_k)\| \leq \mathrm{tol} \cdot \|\nabla f(x_0)\|$. All experiments were run in MATLAB R2023a on a desktop with an Intel Core i5-7200U CPU (2.50 GHz) and 8 GB RAM. \RV{Because gradient evaluation dominates the computational cost, we report convergence in terms of gradient evaluations and mark additional evaluations from line search with red dots. The corresponding runtime comparisons are provided in Appendix~D and the performance is similar.}

\paragraph{Regularized Logistic Regression}
We report numerical simulations on a logistic regression problem with an $\ell_2$ regularizer:
\begin{equation}\label{eq:logreg}
    \min_{x \in \mathbb{R}^n} \left\{ \sum_{i=1}^m \log \big(1 + \exp(-b_i a_i^{\top} x)\big) + \frac{\lambda}{2}\|x\|^2 \right\},
\end{equation}
where $(a_i, b_i) \in \mathbb{R}^n \times \{-1, 1\}$ for $i=1,2,\ldots,m$. 

For this problem, $\mu = \lambda$ and $L = \lambda_{\max}\!\left(\sum_{i=1}^m a_i a_i^{\top}\right) + \lambda$. We use $(a_i, b_i)$ from the Adult Census Income dataset. After removing entries with missing values, the dataset contains 30,162 samples. The Lipschitz constant is $6.30 \times 10^4$. With regularization parameter $\lambda = 0.1$, the condition number is $\kappa = 6.30 \times 10^5$. 

\RV{In Fig.~\ref{fig:lor_nonadaptive_norestart}, we disable accept/reject and restarting in A$^2$GD and obtain A$^{2}$GD-plain. We compare it with other plain accelerated methods. In Fig.~\ref{fig:lor_nonadaptive_restart}, we equip all methods with comparable restarting schemes and evaluate them alongside A$^{2}$GD.} In Fig.~\ref{fig:lor_adaptive}, we compare A$^{2}$GD with other fine-tuned adaptive gradient methods using their recommended parameter settings. 


\begin{minipage}[htdp]{0.31\textwidth}
    \centering
    \includegraphics[width=0.975\linewidth]{./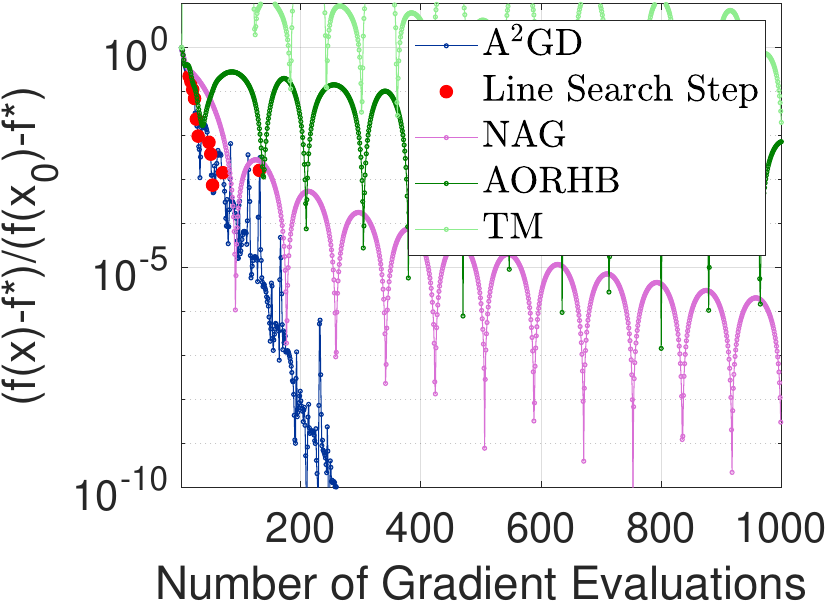}
    \captionsetup{hypcap=false}
    \captionof{figure}{\RV{Comparison without restarting.}}      \label{fig:lor_nonadaptive_norestart}
\end{minipage}%
\hfill
\begin{minipage}[htdp]{0.31\textwidth}
    \centering
    \includegraphics[width=0.975\linewidth]{./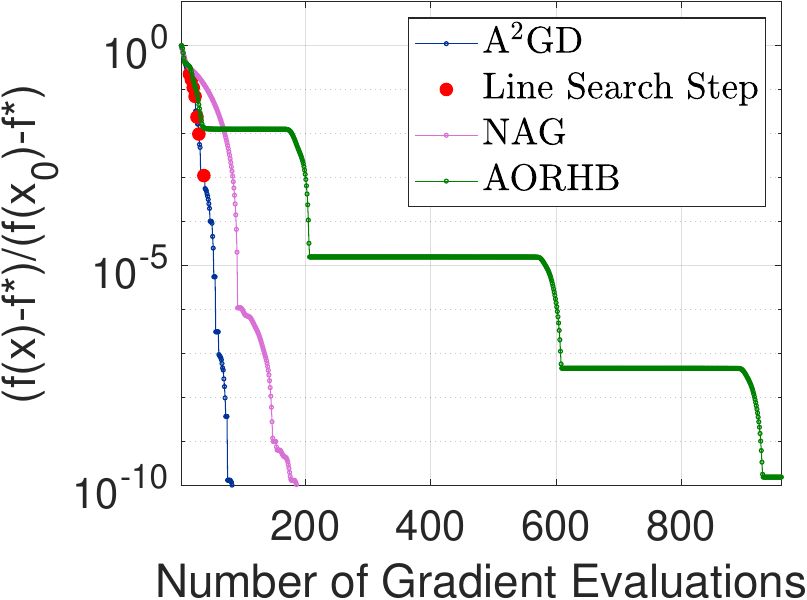}
    \captionsetup{hypcap=false}
    \captionof{figure}{\RV{Comparison with restarting.}} 
    \label{fig:lor_nonadaptive_restart}
\end{minipage}
\hfill
\begin{minipage}[htdp]{0.31\textwidth}
    \centering
    \includegraphics[width=0.95\linewidth]{./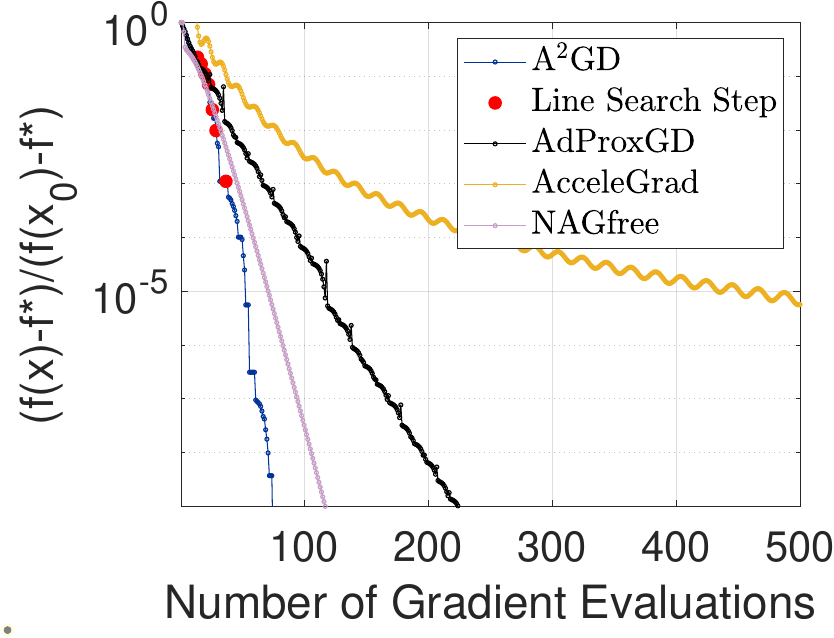}
    \captionsetup{hypcap=false}
    \captionof{figure}{\RV{Comparison with other adaptive methods.}} 
    \label{fig:lor_adaptive}
\end{minipage}

\paragraph{Maximum Likelihood Estimate of the Information Matrix}
We consider the maximum likelihood estimation problem from~\cite[(7.5)]{boyd2004convex}:
\begin{equation} \label{eq:MLEproblem}
    \begin{aligned}
        & \underset{X \in \mathbb{R}^{n \times n}}{\text{minimize}} \quad f(X) := -\log\det X + \mathrm{tr}(XY), \\
        & \text{subject to} \quad \lambda_{\min} \leq \lambda(X) \leq \lambda_{\max},
    \end{aligned}
\end{equation}
where $X$ is symmetric positive definite and $\lambda_{\min}, \lambda_{\max} > 0$ are given bounds. The condition number of $f$ is $\kappa=\lambda_{\max}^2/\lambda_{\min}^2$.

%

Problem~(\ref{eq:MLEproblem}) has a composite structure, with a smooth term $f(X)$ and a nonsmooth indicator $g(X)$ enforcing spectral constraints. The proximal step for $g$ requires eigen-decomposition, eigenvalue projection, and matrix reconstruction, so gradient and proximal evaluations dominate the cost. Reducing these evaluations, particularly during backtracking, is therefore crucial. \RV{We again report convergence in terms of gradient steps and mark additional line-search evaluations with red dots, which occur very rarely after the initial stage and thus invisable in the figures.}

We extend A$^{2}$GD and its convergence analysis to the composite setting; details appear in Appendix~C. We compare A$^{2}$GD with several first-order proximal methods: AdProxGD~\citep{Malitsky2023AdaptivePG}, FISTA~\citep{beck2009fast}, and AOR-HB with perturbation~\citep{chen2025acceleratedgradientmethodsvariable}. We use tolerance $\mathrm{tol}=10^{-6}$ and adopt the stopping rule $\|\nabla h(x_k)+q_k\|\le \mathrm{tol}\,\|\nabla h(x_0)\|$ for all experiments.

\begin{minipage}[htdp]{0.45\textwidth}
    \centering
    \includegraphics[width=0.75\linewidth]{./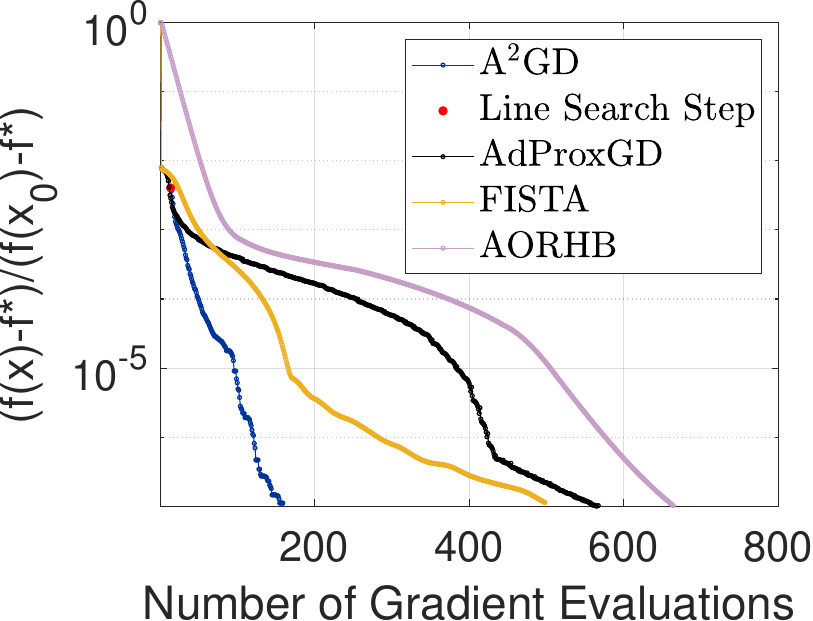}
    \captionsetup{hypcap=false}
    \captionof{figure}{Error curves under setting (1).}      \label{fig:mle_easy}
\end{minipage}%
\hfill
\begin{minipage}[htdp]{0.45\textwidth}
    \centering
    \includegraphics[width=0.75\linewidth]{./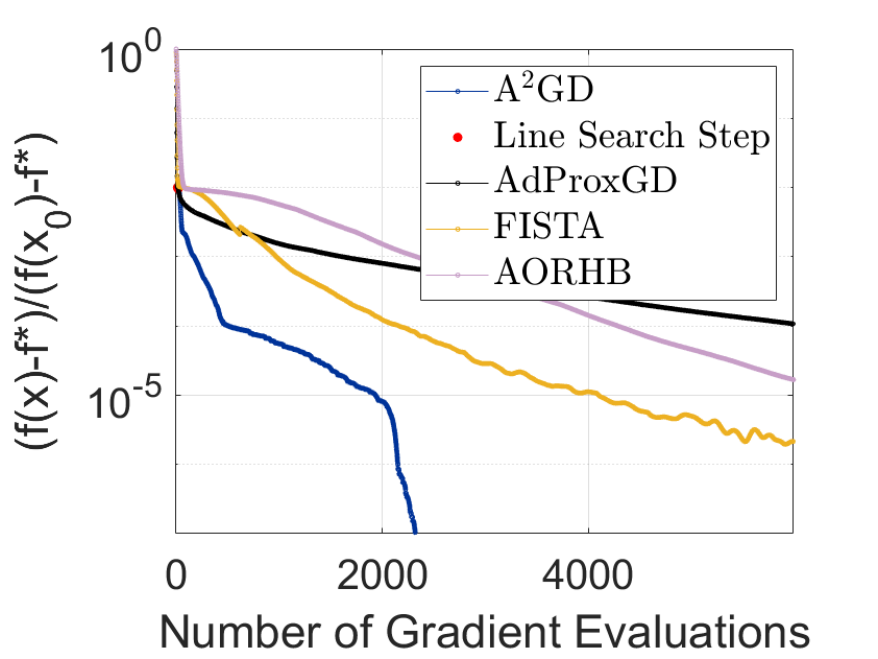}
    \captionsetup{hypcap=false}
    \captionof{figure}{Error curves under setting (2).
} \label{fig:mle_hard}
\end{minipage}

Following~\cite{Malitsky2023AdaptivePG}, we construct the data matrix $Y$ as follows: sample a random vector $y \in \mathbb{R}^n$, and define $y_i = y + \delta_i$ for $i = 1, \dots, M$, with $\delta_i \sim \mathcal{N}(0, I_n)$. Then set
$Y = \frac{1}{M} \sum_{i=1}^M y_i y_i^\top.$
We test our algorithm under two settings:  
(1) $n = 100$, $M = 50$, $\lambda_{\min} = 0.1$, $\lambda_{\max} = 10$;  
(2) $n = 50$, $M = 100$, $\lambda_{\min} = 0.1$, $\lambda_{\max} = 10^3$.

\begin{minipage}[t]{0.34\textwidth}
    \centering
        \label{fig:LASSO_500-1000d}
\includegraphics[width=0.99\linewidth]{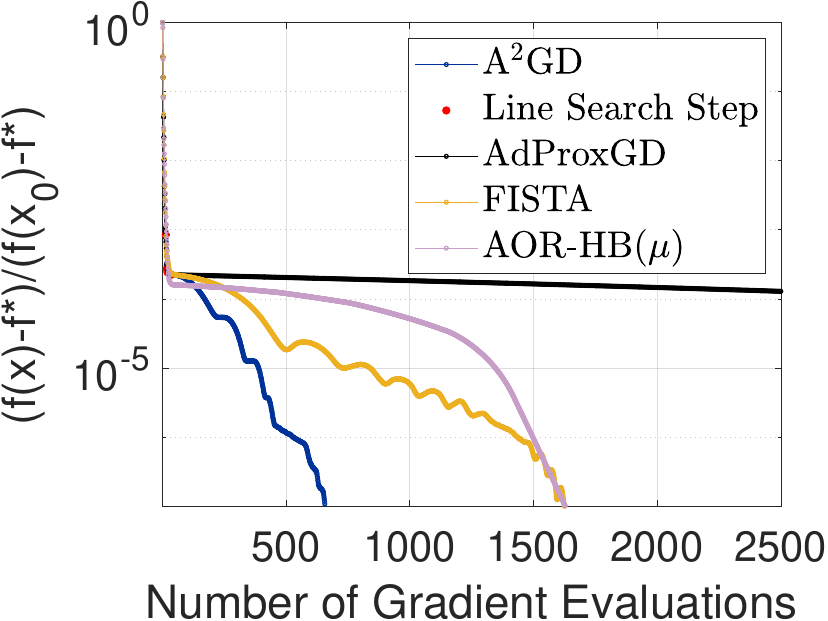}
\captionof{figure}{Error curve for $\ell_{1\text{-}2}$ problem with $n=500, p=1000$. 
}
\end{minipage}
\hfill
\begin{minipage}[t]{0.6\textwidth}
    \paragraph{$\ell_{1\text{-}2}$ nonconvex minimization problem} 
   We consider the $\ell_{1\text{-}2}$ minimization problem
\begin{equation}
    \min_{x\in\mathbb{R}^n} \quad\frac{1}{2}\|Ax-b\|^2+\lambda (\|x\|_1 - \|x\|_2),
\end{equation}
introduced by~\cite{Yin15_l1-2}, promotes sparser solutions than standard convex penalties. 


The matrix $A \in \mathbb{R}^{n \times p}$ is generated from a standard Gaussian distribution, and the ground truth $x^* \in \mathbb{R}^p$ has sparsity $50$. The observation vector is constructed as $b = Ax^*$. We set the regularization parameter $\lambda = 1$ and the problem size: $n = 500$, $p = 1000$. The initial point is sampled as $x_0 = y_0 \sim 10\mathcal{N}(0, I_p)$.
\end{minipage}

\paragraph{\RV{Scaling behavior}.}
We consider the linear finite element method for the Poisson problem
\[
- \Delta u = b \ \text{in } \Omega,\qquad u=0 \ \text{on }\partial\Omega,
\]
where $\Omega$ is the unit disk discretized by a quasi-uniform triangulation $\mathcal{T}_h$. 

%

\begin{table}[htbp]
    \centering
    \caption{Performance comparison on 2D linear Laplacian problem, $\mathrm{tol}=10^{-6}$}
    \label{tab:Poissonresults}
    \resizebox{\textwidth}{!}{
    \begin{tabular}{ccc|cc|cc|cc|cc}
        \toprule
        \multicolumn{3}{c|}{Problem Size} & \multicolumn{2}{c|}{A$^2$GD} & 
        \multicolumn{2}{c|}{AdProxGD} & \multicolumn{2}{c|}{NAG} & 
        \multicolumn{2}{c}{AOR-HB} \\
        \cmidrule(lr){1-3} \cmidrule(lr){4-5} \cmidrule(lr){6-7} 
        \cmidrule(lr){8-9} \cmidrule(lr){10-11}
        $h$ & $n$ & $\kappa$ & \#Grad & Time & \#Grad & Time & \#Grad & Time & \#Grad & Time \\
        \midrule
        $1/20$  & 1262   & 7.85e+02 & 162  & 0.02 & 895   & 0.05 & 583   & 0.02 & 248  & 0.01 \\
        $1/40$  & 5166   & 3.15e+03 & 293  & 0.10 & 3191  & 0.90 & 1205  & 0.26 & 418  & 0.11 \\
        $1/80$  & 20908  & 1.30e+04 & 476  & 0.65 & 10729 & 15.72 & 1651  & 1.83 & 699  & 0.81 \\
        $1/160$ & 84120  & 5.32e+04 & 791  & 5.06 & (>20000) & 131.19 & 2902 & 14.51 & 1187 & 6.11 \\
        \bottomrule
    \end{tabular}
    }
\end{table}

\vskip -4pt
\begin{minipage}[t]{0.375\textwidth}
    \centering
        \label{fig:scaling_Laplace2D}
\includegraphics[width=0.99\linewidth]{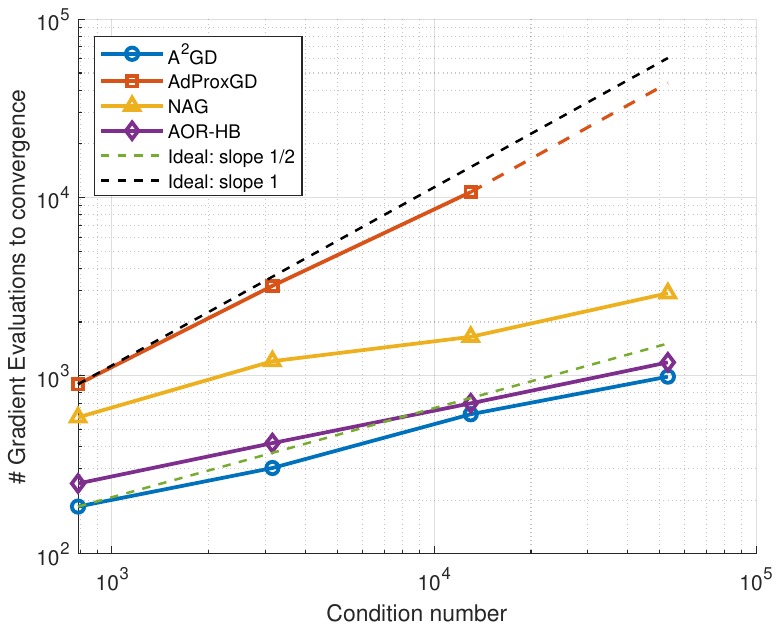}
\captionof{figure}{A$^{2}$GD, NAG, and AOR-HB exhibit the accelerated $\sqrt{\kappa}$ scaling, whereas AdProxGD follows the non-accelerated $\kappa$ rate.
}
\end{minipage}
\hfill
\begin{minipage}[t]{0.585\textwidth}
Using the \texttt{$i$FEM} package~\cite{chen:2008ifem}, we assemble the stiffness matrix $A$ and define the quadratic objective
\[
f(x)=\tfrac12 (x-x^*)^\top A(x-x^*),
\]
with $x^* \in \mathbb{R}^n$ and $x_0\sim \mathrm{Unif}(0,1)$ componentwise.

\smallskip
It is well known that $\kappa(A)=O(h^{-2})=O(n)$. We estimate $L$ and $\mu$ using the extreme eigenvalues of $A$. We use quasi-uniform meshes on the disk rather than structured square grids, where closed-form eigenvalue bounds are available and adaptive selection of $L$ and $\mu$ is less critical.

\smallskip
Table~\ref{tab:Poissonresults} shows that when the condition number increases by a factor of $4$, the number of gradient steps for accelerated methods grows by roughly a factor of $2$, and the total runtime by about a factor of $8$, since each gradient evaluation becomes $4$ times more expensive.
\end{minipage}
\smallskip

Figure~\ref{fig:scaling_Laplace2D} compares scaling behavior across methods: accelerated methods exhibit slopes near $1/2$, whereas the non-accelerated AdProxGD scales with slope close to $1$.

\vskip -4pt
\paragraph{\RV{Ablation study on the warm-up phase and hyper-parameters.}}
We include an adaptive gradient-descent warm-up in A$^2$GD to automatically select $\mu_0$ and $R$, although this is not essential, since the method degrades only moderately when these hyper-parameters are misspecified. In experiments on the regularized logistic regression problem, A$^2$GD remains robust even when these parameters vary by factors of $10^3$; see Fig.~\ref{fig:ablation_warmup}. 

\begin{figure}[htp]
    \centering

    \begin{subfigure}{0.43\textwidth}
        \centering
        \includegraphics[width=0.85\linewidth]{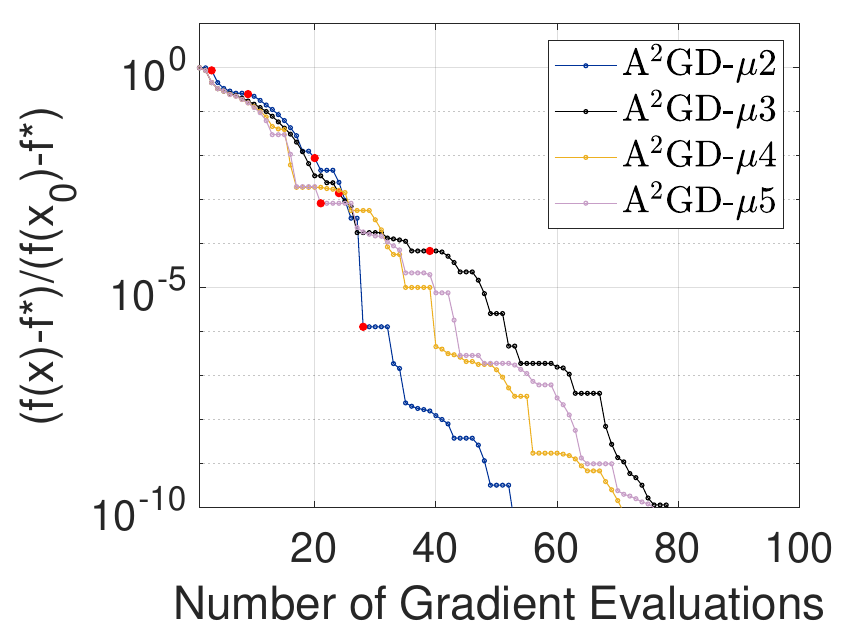}
    \end{subfigure}
    \hfill
    \begin{subfigure}{0.43\textwidth}
        \centering
        \includegraphics[width=0.85\linewidth]{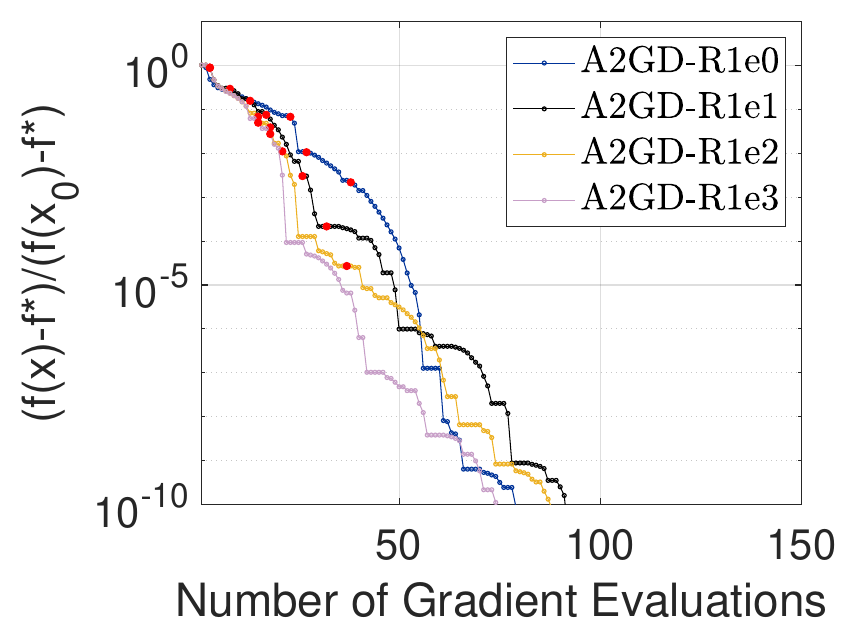}
    \end{subfigure}
\caption{Comparison of A$^{2}$GD variants without warm-up using manual initializations. The left panel shows the effect of different choices of $\mu_0$, and the right panel shows the effect of varying $R$. For $2\le i\le 5$, “A$^{2}$GD-$\mu_i$’’ denotes the manual choice $\mu_0=10^{-i}$, with $i=2$ closest to the warm-up value; for $0\le j\le 3$, “A$^{2}$GD-R1e$j$’’ denotes the manual choice $R=10^j$, with $j=0$ matching the warm-up.}
    \label{fig:ablation_warmup}
\end{figure}

\RV{Across all tests, our A$^2$GD method consistently outperforms baseline algorithms. Empirically, line search is triggered only a few times, typically fewer than $10$, and almost all activations occur in the early phase. In the middle and late stages, line search are rare or completely absent across all tested problems.}

\bibliographystyle{plainnat}
\bibliography{Optimization.bib,reference}


\section*{Appendix A: Adaptive Gradient Descent Method}\label{appendix:AGD}

We present an algorithm for adaptive gradient descent method (Ad-GD) which is a simplified version of A$^2$GD without momentum. 

\begin{algorithm}[htbp]
\caption{Adaptive Gradient Descent Method (Ad-GD)}\label{alg:adaptiveGD2e}
\KwIn{Initial point $x_0 \in \mathbb{R}^n$, initial step size $L_0 > 0$, initial strong convexity estimate $\mu_0 > 0$}
\KwOut{Sequence $\{x_k\}$}

\For{$k = 0,1,2,\dots$}{
    $x_{k+1} \gets x_k - \frac{1}{L_k} \nabla f(x_k)$\;

    $b_k^{(1)} \gets \dfrac{1}{2L_k} \|\nabla f(x_{k+1}) - \nabla f(x_k)\|^2 - D_f(x_k, x_{k+1})$\;

    $b_k^{(2)} \gets -\dfrac{1}{2L_k} \|\nabla f(x_k)\|^2$\;

    $p_k \gets \left(1 + \frac{\mu_k}{L_k}\right)^{-1} \left(p_{k-1} + b_k^{(1)} + b_k^{(2)}\right)$\;

    \If{$p_k > 0$}{
        Use adaptive backtracking to update $L_k$ so that $b_k^{(1)} \leq 0$\;
    }

    $L_k \gets \dfrac{\|\nabla f(x_k) - \nabla f(x_{k+1})\|^2}{2 D_f(x_k, x_{k+1})}$\;

    $\mu_k \gets \min\{\mu_k, L_k\}$\;
}
\end{algorithm}

There are several variants of Ad-GD depending on how we define $\delta_k$ and split $b_k^{(1)}$ and $b_k^{(2)}$. For example, we can use $\delta_k = 1- \mu/L_k$, $b_k^{(2)} = 0$, and 
\begin{equation}
 b_k^{(1)} :=  \frac{1}{2L_k}\|\gf(x_{k+1})-\gf(x_k)\|^2 - D_f(x_k,x_{k+1})
           - \frac{1}{2L_k}\|\gf(x_{k+1})\|^2. 
\end{equation}
The inequality $b_k^{(1)} \leq 0 $ is equivalent to the criteria proposed by Nesterov in 
 \cite{Nesterov2012GradientMF}.
\begin{proposition}
The inequality $b_k^{(1)} \leq 0 $ is equivalent to
\begin{equation}\label{nestorov_stopping}
        m_{L_k}(x_{k+1};x_k)\geq f(x_{k+1}),
    \end{equation}
    where $m_{L_k}(y;x)=f(x)+\dual{\gf(x),y-x}+\frac{L_k}{2}\|y-x\|^2$, $x_{k+1}=\argmin_{y} m_{L_k}(y;x_k)$.  
\end{proposition}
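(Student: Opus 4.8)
The plan is to collapse both sides of the claimed equivalence to one and the same scalar inequality by direct substitution of the gradient step, so that each manipulation is an exact equality and the two implications fall out simultaneously. First I would use that $m_{L_k}(\cdot\,;x_k)$ is a strictly convex quadratic whose unique minimizer is the gradient-descent point $x_{k+1}=x_k-\tfrac{1}{L_k}\gf(x_k)$. Substituting $x_{k+1}-x_k=-\tfrac{1}{L_k}\gf(x_k)$ into
\[
m_{L_k}(x_{k+1};x_k)=f(x_k)+\dual{\gf(x_k),\,x_{k+1}-x_k}+\frac{L_k}{2}\nm{x_{k+1}-x_k}^2
\]
makes the two gradient terms combine, giving $m_{L_k}(x_{k+1};x_k)=f(x_k)-\tfrac{1}{2L_k}\nm{\gf(x_k)}^2$. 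Hence Nesterov's criterion~(\ref{nestorov_stopping}) is equivalent to the descent inequality
\[
f(x_{k+1})-f(x_k)\leq -\frac{1}{2L_k}\nm{\gf(x_k)}^2 .
\]

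Next I would expand the left-hand side of this descent inequality using the Bregman identity already employed in the body of the paper, $f(x_{k+1})-f(x_k)=\dual{\gf(x_{k+1}),\,x_{k+1}-x_k}-D_f(x_k,x_{k+1})$, and substitute $x_{k+1}-x_k=-\tfrac{1}{L_k}\gf(x_k)$ once more to obtain $f(x_{k+1})-f(x_k)=-\tfrac{1}{L_k}\dual{\gf(x_{k+1}),\gf(x_k)}-D_f(x_k,x_{k+1})$. Moving everything to one side, the criterion becomes
\[
-\frac{1}{L_k}\dual{\gf(x_{k+1}),\gf(x_k)}+\frac{1}{2L_k}\nm{\gf(x_k)}^2-D_f(x_k,x_{k+1})\leq 0 .
\]

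The argument is then closed by a single polarization (complete-the-square) identity,
\[
\frac{1}{2L_k}\nm{\gf(x_{k+1})-\gf(x_k)}^2-\frac{1}{2L_k}\nm{\gf(x_{k+1})}^2
=-\frac{1}{L_k}\dual{\gf(x_{k+1}),\gf(x_k)}+\frac{1}{2L_k}\nm{\gf(x_k)}^2 ,
\]
which shows that the left-hand side of the displayed inequality is exactly the variant perturbation $b_k^{(1)}=\tfrac{1}{2L_k}\nm{\gf(x_{k+1})-\gf(x_k)}^2-D_f(x_k,x_{k+1})-\tfrac{1}{2L_k}\nm{\gf(x_{k+1})}^2$. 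Thus the criterion is precisely $b_k^{(1)}\leq 0$, and since every step is an equivalence, both directions hold. There is no genuine obstacle here: the proof is a chain of exact substitutions. The only point requiring care is the bookkeeping of the $\nm{\gf(x_{k+1})}^2$ term, which is the piece distinguishing this variant's $b_k^{(1)}$ from the one in the main text; I would double-check that the expansion of $\nm{\gf(x_{k+1})-\gf(x_k)}^2$ is what converts the inner-product form of the descent condition into the squared-difference form appearing in $b_k^{(1)}$.
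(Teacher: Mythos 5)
Your proof is correct and follows essentially the same route as the paper's: substitute $x_{k+1}=x_k-\tfrac{1}{L_k}\gf(x_k)$ to reduce Nesterov's criterion to $f(x_k)-f(x_{k+1})-\tfrac{1}{2L_k}\|\gf(x_k)\|^2\ge 0$, then convert via the Bregman expansion and the polarization identity into the squared-difference form of $b_k^{(1)}$, with every step an exact equivalence. In fact your bookkeeping is the more reliable of the two: the final displayed line of the paper's proof carries a sign slip (it shows $+\tfrac{1}{2L_k}\|\gf(x_{k+1})-\gf(x_k)\|^2-D_f(x_k,x_{k+1})$ where the identity established two lines earlier forces $-\tfrac{1}{2L_k}\|\gf(x_{k+1})-\gf(x_k)\|^2+D_f(x_k,x_{k+1})$), and your version matches the stated $b_k^{(1)}\le 0$ exactly.
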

    
\begin{proof}
 First, we have the identity
 \begin{align*}\label{eq:simple_identity}
     &f(x_k)-f(x_{k+1})-\frac{\alpha_k}{2}\|\gf(x_k)\|^2\\={}&\frac{\alpha_k}{2}\|\gf(x_{k+1})\|^2-\frac{\alpha_k}{2}\|\gf(x_{k+1})-\gf(x_k)\|^2+D_f(x_k,x_{k+1}).
 \end{align*}
 Notice that    $ x_{k+1}=\argmin_{y} m_{L_k}(y;x_k) \Leftrightarrow x_{k+1}=x_k-\frac{1}{L_k}\gf(x_k), $
 so
 \begin{align*}
     &m_{L_k}(x_{k+1};x_k)\geq f(x_{k+1})\\\Leftrightarrow & f(x_k)+\dual{\gf(x_k),-\frac{1}{L_k}\gf(x_k)}+\frac{L_k}{2}\left\|\frac{1}{L}\gf(x_k)\right\|^2\geq f(x_{k+1})\\
     \Leftrightarrow & f(x_k)-f(x_{k+1})-\frac{1}{2L_k}\|\gf(x_k)\|^2\geq 0\\
     \Leftrightarrow & \frac{1}{2L_k}\|\gf(x_{k+1})\|^2+\frac{1}{2L_k}\|\gf(x_{k+1})-\gf(x_k)\|^2-D_f(x_k,x_{k+1})\geq 0.
 \end{align*}
 Thus, equivalence is proved.
 \end{proof}

\section*{Appendix B: Identities of Accelerated Gradient Methods} \label{sec:B}
We will use the Hessian-based Nesterov accelerated gradient (HNAG) flow proposed in \cite{chen2019orderoptimizationmethodsbased}
\begin{equation}
	\left\{
	\begin{aligned}
		x' = {}&y-x-\beta\nabla f(x),\\
		y'={}&x - y -\frac{1}{\mu}\nabla f(x).
	\end{aligned}
	\right.
\end{equation}
Denote by $\bs z=(x, y)^{\intercal}$ and $\mathcal G(\bs z)$ the right hand side of (\ref{eq:Hagf-intro}), which now becomes $\bs z' = \mathcal G(\bs z)$. In the notation $\nabla \mathcal E$, we consider $\mu$ as a fixed parameter and take derivative with respect to $\bs z$. 

\begin{lemma}
We have the identity
 \begin{equation}
 \begin{aligned}
	-\nabla \mathcal E(\bs z) \cdot \mathcal G(\bs z) = {}&
\mathcal E(\bs z)+ \beta \nm{\nabla f(x)}_*^2  +\frac{\mu}{2}\nm{y-x}^2 +  D_f(x^*, x) -\frac{\mu}{2}\nm{x - x^{\star}}^2.	
\end{aligned}
\end{equation}
\end{lemma}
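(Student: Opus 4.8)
The plan is to verify the identity by a direct computation of the inner product $\nabla \mathcal E(\bs z)\cdot \mathcal G(\bs z)$ and then reorganize the resulting terms into the four pieces on the right-hand side. First I would compute the gradient of the Lyapunov function. Since $\mathcal E(\bs z) = f(x) - f(x^*) + \frac{\mu}{2}\nm{y - x^*}^2$ and $\mu$ is held fixed, differentiating in $\bs z = (x, y)^{\intercal}$ gives $\nabla \mathcal E(\bs z) = (\nabla f(x),\, \mu(y - x^*))^{\intercal}$. The vector field is $\mathcal G(\bs z) = (y - x - \beta\nabla f(x),\, x - y - \tfrac{1}{\mu}\nabla f(x))^{\intercal}$, so $\nabla \mathcal E(\bs z)\cdot \mathcal G(\bs z)$ is the sum of the two componentwise inner products.

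Next I would expand these two inner products. The first component contributes $\dual{\nabla f(x), y - x} - \beta\nm{\nabla f(x)}^2$, and the second contributes $\mu\dual{y - x^*, x - y} - \dual{y - x^*, \nabla f(x)}$. The crucial observation is that the two gradient inner products telescope: after negating, the terms $-\dual{\nabla f(x), y - x}$ and $+\dual{y - x^*, \nabla f(x)}$ combine, since $-(y - x) + (y - x^*) = x - x^*$. This leaves
\[
-\nabla \mathcal E(\bs z)\cdot \mathcal G(\bs z) = \dual{\nabla f(x), x - x^*} + \beta\nm{\nabla f(x)}^2 - \mu\dual{y - x^*, x - y}.
\]

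Finally I would rewrite the two remaining bilinear terms. Using the definition $D_f(x^*, x) = f(x^*) - f(x) - \dual{\nabla f(x), x^* - x}$, the first term becomes $\dual{\nabla f(x), x - x^*} = \left(f(x) - f(x^*)\right) + D_f(x^*, x)$, supplying the $f(x) - f(x^*)$ piece of $\mathcal E$ together with the Bregman term. For the last term I would apply the polarization identity $-\dual{y - x^*, x - y} = \tfrac12\nm{y - x^*}^2 + \tfrac12\nm{y - x}^2 - \tfrac12\nm{x - x^*}^2$ (equivalently, write $x - y = (x - x^*) - (y - x^*)$ and complete the square), which produces the $\frac{\mu}{2}\nm{y - x^*}^2$ needed to finish $\mathcal E(\bs z)$, along with $\frac{\mu}{2}\nm{y - x}^2$ and $-\frac{\mu}{2}\nm{x - x^*}^2$. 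Recognizing $f(x) - f(x^*) + \frac{\mu}{2}\nm{y - x^*}^2 = \mathcal E(\bs z)$ then yields the claimed identity.

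There is no genuine obstacle here; the computation is elementary and mirrors the earlier flow lemma with $\lambda$ replaced by $\mu$. The only points demanding care are bookkeeping ones: keeping the correct argument order in $D_f(x^*, x)$ so that the sign of $\dual{\nabla f(x), x - x^*}$ works out, and distributing the factor $\mu$ across the polarization identity so that exactly one copy of $\frac{\mu}{2}\nm{y - x^*}^2$ is absorbed into $\mathcal E$. In the Euclidean setting $\nm{\cdot}_* = \nm{\cdot}$, so no distinction between the norm and its dual is needed.
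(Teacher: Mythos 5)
Your proposal is correct and follows essentially the same route as the paper's proof: a direct expansion of $-\nabla \mathcal E(\bs z)\cdot\mathcal G(\bs z)$, identification of $\dual{\nabla f(x), x-x^{\star}}$ with $f(x)-f(x^{\star})+D_f(x^{\star},x)$, and a completion of squares on the bilinear term to produce $\frac{\mu}{2}\nm{y-x^{\star}}^2+\frac{\mu}{2}\nm{y-x}^2-\frac{\mu}{2}\nm{x-x^{\star}}^2$. The only cosmetic difference is that the paper rewrites the vector field in terms of $x-x^{\star}$ and $y-x^{\star}$ before taking the inner product, while you telescope the gradient terms afterward; the algebra is identical.
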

\begin{proof}
A direct computation gives 
\begin{equation}\label{eq:A-HNAG}
\begin{split}
	&-\nabla \mathcal E(\bs z) \cdot \mathcal G(\bs z) 
= 
\begin{pmatrix}
\nabla f(x)\\
 \mu (y-x^{\star})
\end{pmatrix}
\begin{pmatrix}
(x-x^{\star}) - (y - x^{\star})+\beta\nabla f(x)\\ 
(y - x^{\star})- (x-x^{\star}) +\frac{1}{\mu}\nabla f(x)\\ 
\end{pmatrix}
	\\
	={}& \dual{\nabla f(x),x-x^{\star}} + \beta\nm{\nabla f(x)}_*^2+ \mu\nm{y-x^{\star}}^2 -\mu (y-x^{\star}, x - x^{\star})\\
=	{}&  \mathcal E(\bs z)+ \beta \nm{\nabla f(x)}_*^2  +  D_f(x^*, x)  +\frac{\mu}{2}\nm{y-x}^2 -\frac{\mu}{2}\nm{x - x^{\star}}^2.	
\end{split}
\end{equation}
\end{proof}

\begin{lemma}\label{lem:identityA2GD}
We have the identity
 	\begin{equation*}
		\begin{split}
&(1+\alpha_k) \mathcal E(\bs z_{k+1}; \mu_{k}) - \mathcal E(\bs z_k; \mu_k) 
			\\
= ({\rm I}) &\quad \frac{1}{2}\left ( \frac{\alpha_k^2}{\mu_k}  - \frac{1}{L_k}\right )\nm{\nabla f(x_{k+1})}_*^2 \\
({\rm II})	& +\frac{1}{2L_k}\| \nabla f(x_{k+1}) - \nabla f(x_k) \|^2 - D_f(x_{k}, x_{k+1})\\
({\rm III})			& -\frac{1}{2L_k}\nm{\nabla f(x_k)}_*^2+ \frac{\alpha_k\mu_k}{2}\left (\nm{x_{k+1}-x^{\star}}^2 - \frac{2}{\mu_k} D_f(x^{\star}, x_{k+1}) - (1+\alpha_k)\nm{x_{k+1}-y_{k+1}}^2\right ) .
		\end{split}
	\end{equation*}
\end{lemma}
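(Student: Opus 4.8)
The plan is to expand the left-hand side directly and match it term-by-term against the three groups, using only the IMEX update~(\ref{eq:ex-HNAG}) together with the Bregman identities. First I would split $(1+\alpha_k)\mathcal E(\bs z_{k+1}; \mu_k) - \mathcal E(\bs z_k; \mu_k)$ into a function-value part $[f(x_{k+1}) - f(x_k)] + \alpha_k[f(x_{k+1}) - f(x^{\star})]$ (the $f(x^{\star})$ terms cancel) and a quadratic part $\frac{\mu_k}{2}\big[(1+\alpha_k)\nm{y_{k+1} - x^{\star}}^2 - \nm{y_k - x^{\star}}^2\big]$. For the function-value part I would apply $f(x_{k+1}) - f(x_k) = \dual{\nabla f(x_{k+1}), x_{k+1} - x_k} - D_f(x_k, x_{k+1})$ and $f(x_{k+1}) - f(x^{\star}) = \dual{\nabla f(x_{k+1}), x_{k+1} - x^{\star}} - D_f(x^{\star}, x_{k+1})$, which immediately produce $-D_f(x_k, x_{k+1})$ for group~(II) and $-\alpha_k D_f(x^{\star}, x_{k+1})$ for group~(III).

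For the quadratic part I would use the elementary identity $(1+\alpha_k)\nm{a}^2 - \nm{c}^2 = \alpha_k\nm{a}^2 + 2\dual{a, a-c} - \nm{a-c}^2$ with $a = y_{k+1} - x^{\star}$ and $c = y_k - x^{\star}$, so that $a - c = y_{k+1} - y_k$ is the $y$-increment. Substituting the second line of~(\ref{eq:ex-HNAG}) into the cross term $\mu_k\dual{y_{k+1} - x^{\star}, y_{k+1} - y_k}$ and into the square $-\frac{\mu_k}{2}\nm{y_{k+1} - y_k}^2$, and the first line into $\dual{\nabla f(x_{k+1}), x_{k+1} - x_k}$, reduces everything to inner products of $\nabla f(x_{k+1})$, the gradient product $\dual{\nabla f(x_{k+1}), \nabla f(x_k)}$, and purely geometric quadratics in $y_{k+1} - x^{\star}$ and $x_{k+1} - y_{k+1}$. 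Expanding $\dual{\nabla f(x_{k+1}), \nabla f(x_k)} = \frac12\nm{\nabla f(x_{k+1})}^2 + \frac12\nm{\nabla f(x_k)}^2 - \frac12\nm{\nabla f(x_{k+1}) - \nabla f(x_k)}^2$ then supplies the $-\frac{1}{2L_k}\nm{\nabla f(x_k)}^2$ of~(III), the $+\frac{1}{2L_k}\nm{\nabla f(x_{k+1}) - \nabla f(x_k)}^2$ of~(II), and the $-\frac{1}{2L_k}\nm{\nabla f(x_{k+1})}^2$ piece of~(I).

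At this point two independent bookkeeping checks remain. The purely geometric quadratics should collect, by completing the square via $x_{k+1} - x^{\star} = (y_{k+1} - x^{\star}) + (x_{k+1} - y_{k+1})$, into exactly $\frac{\alpha_k\mu_k}{2}\big(\nm{x_{k+1} - x^{\star}}^2 - (1+\alpha_k)\nm{x_{k+1} - y_{k+1}}^2\big)$, i.e. the remainder of group~(III); this is routine. The delicate point, which I expect to be the main obstacle, is the coefficient of $\nm{\nabla f(x_{k+1})}^2$ in group~(I): the naive contributions give only $-\frac{\alpha_k^2}{2\mu_k} - \frac{1}{2L_k}$, whereas the target is $+\frac{\alpha_k^2}{2\mu_k} - \frac{1}{2L_k}$. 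The missing $+\frac{\alpha_k^2}{\mu_k}\nm{\nabla f(x_{k+1})}^2$ must come from the leftover mixed terms $\alpha_k(1+\alpha_k)\dual{\nabla f(x_{k+1}), x_{k+1} - y_{k+1}} + \alpha_k\dual{\nabla f(x_{k+1}), y_k - x_{k+1}}$.

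The crux is therefore to rewrite the implicit $y$-update as $(1+\alpha_k)y_{k+1} = y_k + \alpha_k x_{k+1} - \frac{\alpha_k}{\mu_k}\nabla f(x_{k+1})$ and use it to verify the vector identity $(1+\alpha_k)(x_{k+1} - y_{k+1}) + (y_k - x_{k+1}) = \frac{\alpha_k}{\mu_k}\nabla f(x_{k+1})$. Pairing this with $\nabla f(x_{k+1})$ turns the leftover mixed terms into exactly $\frac{\alpha_k^2}{\mu_k}\nm{\nabla f(x_{k+1})}^2$, which combines with the $-\frac{\alpha_k^2}{2\mu_k}$ arising from $-\frac{\mu_k}{2}\nm{y_{k+1}-y_k}^2$ to yield the correct $+\frac{\alpha_k^2}{2\mu_k}$ in~(I). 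Once this identity is established, all terms are accounted for and groups~(I)--(III) are reproduced exactly. Recognizing that the implicit coupling in the $y$-update is precisely what generates this extra gradient-squared term is the key step; everything else is careful but routine algebra.
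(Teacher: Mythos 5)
Your proposal is correct and all the key algebraic moves match the paper's proof: expanding the Lyapunov difference at $\bs z_{k+1}$ via Bregman divergences, substituting the IMEX updates, the three-squares expansion of $\dual{\nabla f(x_{k+1}),\nabla f(x_k)}$ for groups (I)--(II), the use of the implicit $y$-update to convert the leftover mixed terms into $\tfrac{\alpha_k^2}{\mu_k}\nm{\nabla f(x_{k+1})}^2$, and the completion of the square $x_{k+1}-x^{\star}=(y_{k+1}-x^{\star})+(x_{k+1}-y_{k+1})$ for group (III). The only difference is organizational: the paper factors the computation through the continuous-time identity $-\nabla\mathcal E\cdot\mathcal G$ plus a consistency-error correction $\bs z_{k+1}-\bs z_k-\alpha_k\mathcal G(\bs z_{k+1})$, whereas you expand everything directly, which yields the same terms.
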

\begin{proof}
Treat $\mu_k$ as a fixed parameter. We expand the difference
\begin{equation}
\mathcal E(\bs z_{k+1}; \mu_{k}) - \mathcal E(\bs z_k; \mu_k) = \langle \nabla \mathcal E(\bs z_{k+1}; \mu_{k}), \bs z_{k+1} - \bs z_k \rangle - D_{\mathcal E}(\bs z_k, \bs z_{k+1};\mu_k),
\end{equation}
where the negative term $- D_{\mathcal E}(\bs z_k, \bs z_{k+1};\mu_k)$ is expanded as $- D_f(x_k, x_{k+1}) - \frac{\mu_k}{2}\| y_{k} - y_{k+1}\|^2.$

Using the identity~(\ref{eq:A-HNAG}) in the continuous level, we have
$$
\begin{aligned}
&\langle \nabla \mathcal E(\bs z_{k+1}; \mu_{k}), \alpha_k \mathcal G(\bs z_{k+1}, \mu_k) \rangle = - \alpha_k \mathcal E(\bs z_{k+1}, \mu_k)\\
&  - \frac{1}{L_k} \nm{\nabla f(x_{k+1})}_*^2  - \alpha_k D_f(x^*, x_{k+1}) + \frac{\alpha_k\mu_k} {2}\left (\nm{x_{k+1}-x^{\star}}^2 - \nm{x_{k+1}-y_{k+1}}^2\right ).	
\end{aligned}
$$

The difference between the scheme and the implicit Euler method is
$$
\bs z_{k+1} - \bs z_k - \alpha_k \mathcal G(\bs z_{k+1}, \mu_k) = \alpha_k 
\begin{pmatrix}
 y_k - y_{k+1} + \beta_k (\nabla f(x_{k+1}) - \nabla f(x_{k}))\\
0
\end{pmatrix}.
$$
which will bring more terms
	\begin{align*}
&		\dual{\nabla_x \mathcal E(\bs z_{k+1}, \mu_{k}), \bs z_{k+1} - \bs z_k - \alpha_k \mathcal G(\bs z_{k+1}, \mu_k)} \\
		&= \frac{1}{L_k} \left ( \nabla f(x_{k+1}) , \nabla f(x_{k+1}) - \nabla f(x_k)\right )+\alpha_k \dual{\nabla f(x_{k+1}), y_k - y_{k+1}}.
	\end{align*}
	We then use the identity of squares for the cross term of gradients
	\begin{align*}
		&\frac{1}{L_k}( \nabla f(x_{k+1}) , \nabla f(x_{k+1}) - \nabla f(x_k)) \\
		= & - \frac{1}{2L_k}\| \nabla f(x_{k})\|_*^2
		+ \frac{1}{2L_k}\| \nabla f(x_{k+1})\|_*^2 + \frac{1}{2L_k}\| \nabla f(x_{k+1}) - \nabla f(x_k) \|_*^2.
	\end{align*}
As expected, this cross term brings more positive squares but also contribute a negative one.

On the second term, we write as
$$
\begin{aligned}
&\alpha_k \dual{\nabla f(x_{k+1}), y_k - y_{k+1}} ={} \dual{\frac{\alpha_k}{\sqrt{\mu_k}}\nabla f(x_{k+1}), \sqrt{\mu_k}(y_k - y_{k+1})}\\
={}& \frac{\alpha_k^2}{2\mu_k} \| \nabla f(x_{k+1})\|_*^2 + \frac{\mu_k}{2}\nm{y_k - y_{k+1}}^2 - \frac{1}{2}\nm{ \frac{\alpha_k}{\sqrt{\mu_k}}\nabla f(x_{k+1}) - \sqrt{\mu_k} (y_{k} - y_{k+1})}^2\\
={}& \frac{\alpha_k^2}{2\mu_k} \| \nabla f(x_{k+1})\|_*^2 + \frac{\mu_k}{2}\nm{y_k - y_{k+1}}^2 - \frac{1}{2}\alpha_k^2\mu_k\nm{ x_{k+1} - y_{k+1}}^2.
\end{aligned}
$$
Combining altogether, we get the desired identity. 
\end{proof}

\subsection*{Proof of Theorem \ref{thm:conv-ex1-ode-NAG}}
First, we prove convergence of Algorithm \ref{alg:adaptiveHNAG} within a single inner iteration, i.e. $\varepsilon$ is fixed, in the following lemma. It bears similarity to \citep[Theorem 8.3]{chen2025acceleratedgradientmethodsvariable}, and is a direct result of Lemma \ref{lem:identityA2GD}.

\begin{lemma}
\label{lem:single_inner_iter_conv}
Suppose $f$ is convex and $L$-smooth. Let $z_k=(x_k,y_k)$ be the iterates generated by Algorithm~\ref{alg:adaptiveHNAG} within an inner iteration where $\mu=\varepsilon$. Assume that there exists $R>0$ such that
\[
\|x_k - x^*\| \leq R, \quad \forall\, k\geq 0,
\]
and that there exists $l \in (\varepsilon,L)$ such that $L_k \geq l$ for all $k \geq 0$. Then the Lyapunov function exhibits linear convergence up to a perturbation:
\[
\mathcal{E}(z_k;\varepsilon)
\;\leq\;
\left(\frac{1}{1+\sqrt{\varepsilon/(rL)}}\right)^k \mathcal{E}(z_0;\varepsilon)
\;+\;\frac{\varepsilon}{2} R^2,
\]
where $r$ is the backtracking ratio (in Algorithm~\ref{alg:adaptiveHNAG}, $r=3$).
\end{lemma}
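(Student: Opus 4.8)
The plan is to turn the exact identity of Lemma~\ref{lem:identityA2GD} into a one-step contraction with an explicit, summable residual. First I would insert the algorithmic step size $\alpha_k=\sqrt{\mu_k/L_k}=\sqrt{\varepsilon/L_k}$, which annihilates term~$(\mathrm{I})$ since its coefficient $\tfrac12(\alpha_k^2/\mu_k-1/L_k)$ vanishes. Next I would bound term~$(\mathrm{III})$: convexity gives $D_f(x^*,x_{k+1})\ge 0$, so the summand $-\alpha_k D_f(x^*,x_{k+1})\le 0$ is discarded; the assumption $\|x_{k+1}-x^*\|^2\le R^2$ replaces $\tfrac{\alpha_k\varepsilon}{2}\|x_{k+1}-x^*\|^2$ by $\tfrac{\alpha_k\varepsilon}{2}R^2$; and the leftover terms $-\tfrac{1}{2L_k}\|\nabla f(x_k)\|^2$ and $-\tfrac{\alpha_k\varepsilon(1+\alpha_k)}{2}\|x_{k+1}-y_{k+1}\|^2$ are nonpositive and dropped. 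Since term~$(\mathrm{II})$ is exactly $b_k^{(1)}$, this leaves
\[
(1+\alpha_k)\,\mathcal E(z_{k+1};\varepsilon)\le \mathcal E(z_k;\varepsilon)+b_k^{(1)}+\tfrac{\alpha_k\varepsilon}{2}R^2 .
\]

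The second step controls $b_k^{(1)}$ and the contraction factor through the backtracking design. The line search on $L_k$ enforces the Lipschitz criterion~(\ref{eq:linesearch}), i.e.\ $b_k^{(1)}\le 0$, so the bracket collapses to the residual and
\[
\mathcal E(z_{k+1};\varepsilon)\le \delta_k\,\mathcal E(z_k;\varepsilon)+\delta_k\,\tfrac{\alpha_k\varepsilon}{2}R^2,\qquad \delta_k=\tfrac{1}{1+\alpha_k}.
\]
In parallel I would prove the uniform bound $L_k\le rL$: cocoercivity~(\ref{eq:cocov}) makes the update $L_{k+1}=\|\nabla f(x_{k+1})-\nabla f(x_k)\|^2/(2D_f(x_k,x_{k+1}))$ satisfy $L_{k+1}\le L$, while one backtracking step $L_k\gets rL_k/v$ overshoots by at most $r$ because $v=2L_kD_f(x_k,x_{k+1})/\|\nabla f(x_{k+1})-\nabla f(x_k)\|^2\ge L_k/L$. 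Hence $\alpha_k=\sqrt{\varepsilon/L_k}\ge\sqrt{\varepsilon/(rL)}$ and $\delta_k\le(1+\sqrt{\varepsilon/(rL)})^{-1}$; the lower bound $L_k\ge l$ keeps $\alpha_k$ bounded throughout.

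The final step unrolls the recursion and sums the residual. Setting $P_i=\prod_{j=i}^{k-1}\delta_j$ and using $\delta_i\alpha_i=1-\delta_i$, one obtains the telescoping identity $P_i\alpha_i=P_{i+1}-P_i$, whence
\[
\sum_{i=0}^{k-1}P_i\alpha_i=1-\prod_{j=0}^{k-1}\delta_j\le 1 .
\]
Unrolling the contraction then gives $\mathcal E(z_k;\varepsilon)\le\big(\prod_{i=0}^{k-1}\delta_i\big)\mathcal E(z_0;\varepsilon)+\tfrac{\varepsilon R^2}{2}\sum_{i=0}^{k-1}P_i\alpha_i\le\big(\prod_{i=0}^{k-1}\delta_i\big)\mathcal E(z_0;\varepsilon)+\tfrac{\varepsilon}{2}R^2$, and inserting $\delta_i\le(1+\sqrt{\varepsilon/(rL)})^{-1}$ yields the claim. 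Equivalently, one may invoke Lemma~\ref{lm:cumsum} with $b_k=b_k^{(1)}+b_k^{(2)}$, split $b_k=\tilde b_k+\tfrac{\alpha_k\varepsilon}{2}R^2$ with $\tilde b_k\le 0$, and conclude that the accumulated perturbation obeys $p_{k-1}\le\tfrac{\varepsilon}{2}R^2$.

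I expect the main obstacle to be the bookkeeping of $b_k^{(1)}$. Because the algorithm triggers line search only when $p_k>0$, one cannot in general assert $b_k^{(1)}\le 0$ at every accepted step, so the clean per-step residual is most safely recovered at the level of the accumulated perturbation through Lemma~\ref{lm:cumsum}: the positive part of $b_k^{(2)}$ that survives---precisely because $\mu_k$ is floored at $\varepsilon$ and cannot be reduced further---is exactly $\tfrac{\alpha_k\varepsilon}{2}R^2$, while all other contributions are nonpositive. The secondary technical point is the uniform upper bound $L_k\le rL$ that underlies the stated rate.
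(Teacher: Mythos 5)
Your proof follows essentially the same route as the paper's: apply Lemma~\ref{lem:identityA2GD}, kill term (I) with $\alpha_k=\sqrt{\mu_k/L_k}$, bound term (III) by $\tfrac{\alpha_k\varepsilon}{2}R^2$ plus nonpositive remainders, squeeze $\alpha_k$ between $\sqrt{\varepsilon/(rL)}$ and $\sqrt{\varepsilon/l}$, unroll, and absorb the residual into $\tfrac{\varepsilon}{2}R^2$. Two remarks on where you diverge. First, your telescoping identity $\sum_{i}P_i\alpha_i = 1-\prod_j\delta_j\le 1$ is an exact and cleaner way to sum the residual than the paper's geometric-series estimate, which bounds the sum using $\delta\le(1+\sqrt{\varepsilon/l})^{-1}$ even though only $\alpha_k\le\sqrt{\varepsilon/l}$ (hence $\delta_k\ge(1+\sqrt{\varepsilon/l})^{-1}$) was established; your version is the one I would keep. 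Second, the bookkeeping issue you flag at the end is real but is resolved exactly as you suspect: the paper does \emph{not} assert $b_k^{(1)}\le 0$ per step, it carries the full accumulated perturbation $p_{k+1}$ of $b_k^{(1)}+b_k^{(2)}$ through the unrolled recursion and invokes the algorithmic guarantee $p_{k+1}\le 0$, while the $\tfrac{\alpha_k\mu_k}{2}R^2$ contributions are summed separately. Your ``equivalently'' variant, which splits $b_k=\tilde b_k+\tfrac{\alpha_k\varepsilon}{2}R^2$ and claims $\tilde b_k\le 0$, still implicitly requires $b_k^{(1)}\le 0$ and so does not close the gap; state the argument through $p_{k+1}\le 0$ instead. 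You also supply a proof that $L_k\le rL$ (cocoercivity plus the bounded overshoot of one adaptive backtracking step), which the paper uses but does not spell out --- a worthwhile addition.
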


\begin{proof}
By Lemma~\ref{lem:identityA2GD}, we have
\[
\mathcal{E}(z_{k+1};\mu_{k+1})
\;\leq\;
\frac{1}{1+\alpha_k}\mathcal{E}(z_k;\mu_k)
+ \frac{1}{1+\alpha_k}\bigl(b_k^{(1)}+b_k^{(2)}\bigr)
+ \frac{\alpha_k\mu_k}{2(1+\alpha_k)} R^2.
\]

Since $l \leq L_k \leq rL$, it follows that
\[
\sqrt{\tfrac{\varepsilon}{rL}}
\;\leq\; \alpha_k
\;\leq\; \sqrt{\tfrac{\varepsilon}{l}}.
\]
Therefore,
\[
\mathcal{E}(z_{k+1};\mu_{k+1})
\;\leq\;
\frac{1}{1+\sqrt{\varepsilon/(rL)}} \mathcal{E}(z_k;\mu_k)
+ \frac{1}{1+\alpha_k}\bigl(b_k^{(1)}+b_k^{(2)}\bigr)
+ \frac{\varepsilon \sqrt{\varepsilon/l}}{2\bigl(1+\sqrt{\varepsilon/l}\bigr)} R^2.
\]

Iterating the inequality yields
\[
\mathcal{E}(z_{k+1})
\;\leq\;
\left(\frac{1}{1+\sqrt{\varepsilon/(rL)}}\right)^{k+1}
\mathcal{E}(z_0)
+ p_{k+1}
+ \frac{\varepsilon \sqrt{\varepsilon/l}}{2\bigl(1+\sqrt{\varepsilon/l}\bigr)}
\sum_{i=0}^{k}
\left(\frac{1}{1+\sqrt{\varepsilon/l}}\right)^i R^2,
\]
where $p_{k+1}$ is the accumulated perturbation. By Algorithm~\ref{alg:adaptiveHNAG}, we have $p_{k+1} \leq 0$.

Finally, the geometric sum is bounded as
\[
\sum_{i=0}^k \left(\frac{1}{1+\sqrt{\varepsilon/l}}\right)^i
\;\leq\;
\frac{1+\sqrt{\varepsilon/l}}{\sqrt{\varepsilon/l}}.
\]
Substituting this estimate gives the claimed bound
\[
\mathcal{E}(z_{k+1};\varepsilon)
\;\leq\;
\left(\frac{1}{1+\sqrt{\varepsilon/(rL)}}\right)^{k+1} \mathcal{E}(z_0;\varepsilon)
+ \frac{\varepsilon}{2}R^2.
\]
\end{proof}

\begin{proof}[Proof of Theorem \ref{thm:conv-ex1-ode-NAG}]
We distinguish between the convex case ($\mu=0$) and the strongly convex case ($\mu>0$).

If $\mu=0$, in this case, the proof of \citep[Theorem~8.4]{chen2025acceleratedgradientmethodsvariable} applies directly, once the single-inner-iteration convergence relation (Lemma~\ref{lem:single_inner_iter_conv}) is established. Therefore, no further argument is needed.

If instead, $\mu>0$, recall that in the algorithm the effective radius is updated as
\[
R_k^2 \;=\; \Bigl(1-\tfrac{\mu}{\mu_k}\Bigr)R^2 .
\]
Thus, whenever $\mu_k \geq \mu$, we obtain $R_k^2 \leq 0$, which implies that further reduction of $\mu_k$ is no longer admissible. In particular, $\mu_k$ will stop decreasing once the tolerance parameter $\varepsilon$ satisfies $\varepsilon \leq \mu$.

Since $\varepsilon$ is halved at each outer stage, the final value of $\mu_k$ is therefore bounded below by $\mu/2$. At the same time, the smoothness parameter satisfies $L_k \leq rL$ by construction. Hence, in the terminal stage we obtain an effective condition number bounded by
\[
\kappa_{\mathrm{eff}}
= \frac{L_k}{\mu_k}
\;\leq\; \frac{rL}{\mu/2}
= \frac{2rL}{\mu}.
\]

Applying the convergence estimate from Lemma~\ref{lem:single_inner_iter_conv} in this regime, the Lyapunov function contracts linearly:
\[
\mathcal{E}_{k_s}
\;\leq\;
\Biggl(\frac{1}{1+\sqrt{\mu_k/L_k}}\Biggr)^{k_s}\, \mathcal{E}_0
\;\leq\;
\Biggl(\frac{1}{1+\sqrt{\mu/2rL}}\Biggr)^{k_s}\, \mathcal{E}_0.
\]

Therefore, to ensure that $\mathcal{E}_{k_s}\leq \mathrm{tol}\cdot \mathcal{E}_0$, it suffices to take
\[
k_s \;\geq\;
\frac{\ln(1/\mathrm{tol})}{\ln\!\left(1+\sqrt{\mu/2rL}\right)}
\;=\;\mathcal{O}\!\Bigl(\sqrt{2rL/\mu}\,\ln(1/\mathrm{tol})\Bigr).
\]

This establishes the desired complexity bound in both cases.
\end{proof}

\section*{Appendix C: Composite Convex Optimization}
We derive the continuous time analogy to Lemma 3.1. First, define the composite right hand side update
\begin{equation*}
    \mathcal{G}(z)=\left(y-x-\beta(\gh(x)+q),x-y-\frac{1}{\mu}(\gh(x)+q)\right)^{\mathrm T},
\end{equation*}
where $q\in\partial g(x)$. Let $\mathcal E_h(z;\mu)=h(x)-h(x^*)+\frac{\mu}{2}\|y-x^*\|^2$, then $\mathcal E(z;\mu)=\mathcal E_h(z;\mu)+(g(x)-g(x^*))$ is splitted into a smooth part and a non-smooth part.
\begin{lemma}\label{lem:continuousODEcomposite}
    We have the following inequality
    \begin{equation*}
        -\dual{\nabla\mathcal{E}_h(x)+\binom{q}{0},\mathcal{G}(z)}\geq \mathcal{E}(z)+\beta\|\gh(x)+q\|_*^2+\frac{\mu}{2}\|y-x\|^2+D_h(x^*,x)-\frac{\mu}{2}\|x-x^*\|^2.
    \end{equation*}
\end{lemma}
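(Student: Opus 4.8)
The plan is to mirror the continuous-level computation behind the smooth identity~(\ref{eq:A-HNAG}), replacing the gradient $\nabla f(x)$ throughout by the \emph{composite gradient} $p := \gh(x)+q$, where $q\in\partial g(x)$. First I would record that $\nabla\mathcal{E}_h(z)+\binom{q}{0}=\binom{\gh(x)+q}{\mu(y-x^*)}=\binom{p}{\mu(y-x^*)}$, and then expand the inner product $\dual{\binom{p}{\mu(y-x^*)},\mathcal{G}(z)}$ componentwise. Collecting the quadratic-in-$p$ contributions yields $-\beta\|p\|_*^2$, while the two mixed terms $-\dual{p,y-x}$ and $+\dual{p,y-x^*}$ telescope to $\dual{p,x-x^*}$, exactly as in the smooth case.

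Next I would treat the purely quadratic remainder $\mu\|y-x^*\|^2-\mu\dual{y-x^*,\,x-x^*}$ using the polarization identity $\dual{y-x^*,x-x^*}=\tfrac12\bigl(\|y-x^*\|^2+\|x-x^*\|^2-\|y-x\|^2\bigr)$. This produces $\tfrac{\mu}{2}\|y-x^*\|^2+\tfrac{\mu}{2}\|y-x\|^2-\tfrac{\mu}{2}\|x-x^*\|^2$: the first piece is absorbed into $\mathcal{E}_h$, and the remaining two already match the target right-hand side. Up to this point everything is an \emph{exact equality} whose structure is identical to the smooth derivation.

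The one genuinely new step — and the origin of the inequality rather than an equality — is the analysis of $\dual{p,x-x^*}=\dual{\gh(x),x-x^*}+\dual{q,x-x^*}$. For the smooth part I would apply the Bregman identity $\dual{\gh(x),x-x^*}=D_h(x^*,x)+h(x)-h(x^*)$. For the nonsmooth part I would invoke convexity of $g$: since $q\in\partial g(x)$, the subgradient inequality $g(x^*)\ge g(x)+\dual{q,x^*-x}$ rearranges to $\dual{q,x-x^*}\ge g(x)-g(x^*)$. This is the sole place where $\ge$ is introduced, and it requires nothing beyond $g$ being convex with $q$ a subgradient.

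Finally I would recombine the pieces. The terms $h(x)-h(x^*)+g(x)-g(x^*)+\tfrac{\mu}{2}\|y-x^*\|^2$ reassemble precisely into $\mathcal{E}(z)$, leaving $\beta\|p\|_*^2$, $\tfrac{\mu}{2}\|y-x\|^2$, $D_h(x^*,x)$, and $-\tfrac{\mu}{2}\|x-x^*\|^2$, which is exactly the claimed bound. I expect the main obstacle to be conceptual rather than computational: one must split the composite gradient correctly so that $h$ is handled through its Bregman divergence (an equality) while $g$ is handled through the subgradient inequality (supplying the inequality direction); once this split is made, the remaining algebra is a routine transcription of the smooth identity.
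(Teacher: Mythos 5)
Your proposal is correct and follows essentially the same route as the paper: expand the inner product of $\bigl(\gh(x)+q,\ \mu(y-x^{*})\bigr)$ against $-\mathcal{G}(z)$, reduce the mixed terms to $\dual{\gh(x)+q,\,x-x^{*}}$ plus the polarization identity for the $\mu$-quadratic part, and obtain the inequality solely from the subgradient bound $\dual{q,x-x^{*}}\ge g(x)-g(x^{*})$ together with the Bregman identity for $h$. No gaps.
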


\begin{proof}
A direct computation gives 
\begin{equation}
\begin{split}
	&-\dual{\nabla\mathcal{E}_h(x)+\binom{q}{0},\mathcal{G}(z)}
= 
\begin{pmatrix}
\nabla h(x)+q\\
 \mu (y-x^{\star})
\end{pmatrix}
\begin{pmatrix}
(x-x^{\star}) - (y - x^{\star})+\beta(\nabla h(x)+q)\\ 
(y - x^{\star})- (x-x^{\star}) +\frac{1}{\mu}(\nabla h(x)+q)\\ 
\end{pmatrix}
	\\
	={}& \dual{\nabla h(x)+q,x-x^{\star}} + \beta\nm{\nabla h(x)+q}_*^2+ \mu\nm{y-x^{\star}}^2 -\mu (y-x^{\star}, x - x^{\star})\\
\geq	{}&  \mathcal E(\bs z)+ \beta \nm{\nabla h(x)+q}_*^2  +  D_h(x^*, x)  +\frac{\mu}{2}\nm{y-x}^2 -\frac{\mu}{2}\nm{x - x^{\star}}^2,
\end{split}
\end{equation}
the last inequality following from $q\in\partial g(x)$.
\end{proof}

\begin{lemma}
    We have the following inequality
 	\begin{equation*}
		\begin{split}
&(1+\alpha_k) \mathcal E(\bs z_{k+1}; \mu_{k}) - \mathcal E(\bs z_k; \mu_k) 
			\\
\leq ({\rm I}) &\quad \frac{1}{2}\left ( \frac{\alpha_k^2}{\mu_k}  - \frac{1}{L_k}\right )\nm{\nabla h(x_{k+1}) + q_{k+1} }_*^2 \\
({\rm II})	& +\frac{1}{2L_k}\| \nabla h(x_{k+1}) - \nabla h(x_k) \|^2 - D_h(x_{k}, x_{k+1})\\
({\rm III})			& -\frac{1}{2L_k}\nm{\nabla h(x_k) + q_{k+1} }_*^2+ \frac{\alpha_k\mu_k}{2}\left (\nm{x_{k+1}-x^{\star}}^2 - \frac{2}{\mu_k} D_h(x^{\star}, x_{k+1}) - (1+\alpha_k)\nm{x_{k+1}-y_{k+1}}^2\right ) .
		\end{split}
	\end{equation*}
\end{lemma}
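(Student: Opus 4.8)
The plan is to mirror the proof of the smooth identity (Lemma~\ref{lem:identityA2GD}) almost line for line, tracking the single place where the nonsmooth term $g$ turns an equality into an inequality. First I would split the Lyapunov gap as $\mathcal E(\bs z_{k+1};\mu_k) - \mathcal E(\bs z_k;\mu_k) = [\mathcal E_h(\bs z_{k+1};\mu_k) - \mathcal E_h(\bs z_k;\mu_k)] + [g(x_{k+1}) - g(x_k)]$. The smooth part admits the exact Bregman expansion $\dual{\nabla \mathcal E_h(\bs z_{k+1};\mu_k),\, \bs z_{k+1} - \bs z_k} - D_{\mathcal E_h}(\bs z_k,\bs z_{k+1};\mu_k)$, where the negative Bregman term splits as $-D_h(x_k,x_{k+1}) - \tfrac{\mu_k}{2}\nm{y_k - y_{k+1}}^2$. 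For the nonsmooth part I would invoke convexity of $g$ in the form $g(x_{k+1}) - g(x_k) \le \dual{q_{k+1},\, x_{k+1} - x_k}$ with $q_{k+1}\in\partial g(x_{k+1})$. This is the \emph{only} inequality in the whole argument, and it lets me replace the smooth gradient $\nabla\mathcal E_h$ by the composite object whose $x$-component is $G_{k+1}:=\gh(x_{k+1}) + q_{k+1}$.

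Next I would write $\bs z_{k+1} - \bs z_k = \alpha_k\mathcal G(\bs z_{k+1}) + [\bs z_{k+1} - \bs z_k - \alpha_k\mathcal G(\bs z_{k+1})]$ and treat the two pieces separately. For the first piece I would apply the continuous-time composite inequality (Lemma~\ref{lem:continuousODEcomposite}) at $\bs z_{k+1}$ with the subgradient $q_{k+1}$, multiplied by $\alpha_k$ and using $\alpha_k\beta_k = 1/L_k$; this yields $-\alpha_k\mathcal E(\bs z_{k+1};\mu_k)$ (which merges with the left side to produce the factor $1+\alpha_k$), together with $-\tfrac{1}{L_k}\nm{G_{k+1}}_*^2$, $-\alpha_k D_h(x^{\star},x_{k+1})$, and the $\tfrac{\alpha_k\mu_k}{2}\bigl(\nm{x_{k+1}-x^{\star}}^2 - \nm{x_{k+1}-y_{k+1}}^2\bigr)$ terms. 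The crucial check is that the correction piece coincides with the smooth case: substituting the composite scheme, I expect the $x$-residual to collapse to $\alpha_k[(y_k - y_{k+1}) + \beta_k(\gh(x_{k+1}) - \gh(x_k))]$ and the $y$-residual to vanish, precisely because the explicit subgradient $q_{k+1}$ enters both scheme equations identically and hence cancels in the residual.

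The algebraic heart is the pairing $\dual{G_{k+1},\, \bs z_{k+1} - \bs z_k - \alpha_k\mathcal G(\bs z_{k+1})}$, splitting into a gradient cross term $\tfrac{1}{L_k}\dual{G_{k+1},\, \gh(x_{k+1}) - \gh(x_k)}$ and a momentum cross term $\alpha_k\dual{G_{k+1},\, y_k - y_{k+1}}$. The key device is to set $G_k := \gh(x_k) + q_{k+1}$ with the \emph{same} subgradient $q_{k+1}$, so that $G_{k+1} - G_k = \gh(x_{k+1}) - \gh(x_k)$ and the gradient cross term becomes $\tfrac{1}{L_k}\dual{G_{k+1},\, G_{k+1} - G_k}$. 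The identity of squares $\dual{G_{k+1}, G_{k+1}-G_k} = \tfrac12\nm{G_{k+1}}^2 - \tfrac12\nm{G_k}^2 + \tfrac12\nm{G_{k+1}-G_k}^2$ then produces exactly $+\tfrac{1}{2L_k}\nm{\gh(x_{k+1})+q_{k+1}}^2$, $-\tfrac{1}{2L_k}\nm{\gh(x_k)+q_{k+1}}^2$, and $+\tfrac{1}{2L_k}\nm{\gh(x_{k+1})-\gh(x_k)}^2$. For the momentum cross term I would complete the square as in the smooth case, using $y_k - y_{k+1} = -\alpha_k(x_{k+1}-y_{k+1}) + \tfrac{\alpha_k}{\mu_k}G_{k+1}$ to collapse the leftover square to $\tfrac12\alpha_k^2\mu_k\nm{x_{k+1}-y_{k+1}}^2$.

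Finally I would collect terms. The three $\nm{G_{k+1}}_*^2$ contributions give coefficient $-\tfrac1{L_k}+\tfrac1{2L_k}+\tfrac{\alpha_k^2}{2\mu_k} = \tfrac12(\tfrac{\alpha_k^2}{\mu_k}-\tfrac1{L_k})$, reproducing (I); the smooth-difference square with $-D_h(x_k,x_{k+1})$ gives (II); and $-\tfrac{1}{2L_k}\nm{\gh(x_k)+q_{k+1}}^2$ together with the $-\alpha_k D_h(x^{\star},x_{k+1})$ term and the two $\nm{x_{k+1}-y_{k+1}}^2$ contributions (which combine into the factor $1+\alpha_k$) reassemble into (III) after factoring $\tfrac{\alpha_k\mu_k}{2}$, with $\tfrac{2}{\mu_k}D_h(x^{\star},x_{k+1})$ emerging from the rescaled Bregman term. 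I expect the main obstacle to be purely bookkeeping: keeping the single shared subgradient $q_{k+1}$ inside both $G_{k+1}$ and $G_k$ so that every difference that must remain smooth does so, while the convexity inequality for $g$ is applied exactly once and in the correct direction.
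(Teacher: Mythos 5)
Your proposal is correct and follows essentially the same route as the paper's own proof in Appendix~C: the single subgradient inequality $g(x_{k+1})-g(x_k)\le\langle q_{k+1},x_{k+1}-x_k\rangle$ entering the Bregman expansion of $\mathcal E$, the application of the continuous composite inequality (Lemma~\ref{lem:continuousODEcomposite}) at $\bs z_{k+1}$ with the same $q_{k+1}$, the residual $\bs z_{k+1}-\bs z_k-\alpha_k\mathcal G(\bs z_{k+1})$ collapsing to the smooth-gradient difference in the $x$-component and vanishing in the $y$-component, and the identity of squares carried out with the shared subgradient so that $(\nabla h(x_{k+1})+q_{k+1})-(\nabla h(x_k)+q_{k+1})=\nabla h(x_{k+1})-\nabla h(x_k)$. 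The final bookkeeping of coefficients matches the paper's term-by-term assembly of (I)--(III).
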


\begin{proof}
The proof is similar to the smooth convex case. Expand the difference of $\mathcal{E}$ at $z_{k+1}$,
\begin{equation}\label{eq:Elambda}
\mathcal E(\bs z_{k+1}; \mu_{k}) - \mathcal E(\bs z_k; \mu_k) \leq \langle \nabla \mathcal E_h(\bs z_{k+1}; \mu_{k}) + \binom{q_{k+1}}{0}, \bs z_{k+1} - \bs z_k \rangle - D_{\mathcal E_h}(\bs z_k, \bs z_{k+1};\mu_k),
\end{equation}
where the negative term $- D_{\mathcal E_h}(\bs z_k, \bs z_{k+1};\mu_k)$ is expanded as $- D_h(x_k, x_{k+1}) - \frac{\mu_k}{2}\| y_{k} - y_{k+1}\|^2.$ The inequality is due to the definition of the subgradient.

From Lemma \ref{lem:continuousODEcomposite}, we have
$$
\begin{aligned}
&\langle \nabla \mathcal E_h(\bs z_{k+1}; \mu_{k})+\binom{q_{k+1}}{0}, \alpha_k \mathcal G(\bs z_{k+1}, \mu_k) \rangle \leq - \alpha_k \mathcal E(\bs z_{k+1}, \mu_k)\\
&  - \frac{1}{L_k} \nm{\nabla h(x_{k+1})+q_{k+1}}_*^2  - \alpha_k D_h(x^*, x_{k+1}) + \frac{\alpha_k\mu_k} {2}\left (\nm{x_{k+1}-x^{\star}}^2 - \nm{x_{k+1}-y_{k+1}}^2\right ).	
\end{aligned}
$$

The difference between the scheme and the implicit Euler method is
$$
\bs z_{k+1} - \bs z_k - \alpha_k \mathcal G(\bs z_{k+1}, \mu_k) = \alpha_k 
\begin{pmatrix}
 y_k - y_{k+1} + \beta_k (\nabla h(x_{k+1}) - \nabla h(x_{k}))\\
0
\end{pmatrix}.
$$
which will bring more terms
	\begin{align*}
&		\dual{\nabla_x \mathcal E_h(\bs z_{k+1}, \mu_{k}) + q_{k+1}, \bs z_{k+1} - \bs z_k - \alpha_k \mathcal G(\bs z_{k+1}, \mu_k)} \\
		&= \frac{1}{L_k} \left ( \nabla h(x_{k+1}) + q_{k+1} , \nabla h(x_{k+1}) - \nabla h(x_k)\right )+\alpha_k \dual{\nabla h(x_{k+1}) + q_{k+1}, y_k - y_{k+1}}.
	\end{align*}
    
For the first term, we use the identity of squares 
	\begin{align*}
		&\frac{1}{L_k}( \nabla h(x_{k+1}) + q_{k+
        1} , \nabla h(x_{k+1}) - \nabla h(x_k)) \\
		= & - \frac{1}{2L_k}\| \nabla h(x_{k})+q_{k+1}\|_*^2
		+ \frac{1}{2L_k}\| \nabla h(x_{k+1})+q_{k+1}\|_*^2 + \frac{1}{2L_k}\| \nabla h(x_{k+1}) - \nabla h(x_k) \|_*^2.
	\end{align*}
As expected, this cross term brings more positive squares but also contribute a negative one.

For the second term, we rewrite as
$$
\begin{aligned}
&\alpha_k \dual{\nabla h(x_{k+1})+q_{k+1}, y_k - y_{k+1}} ={} \dual{\frac{\alpha_k}{\sqrt{\mu_k}}\nabla h(x_{k+1})+q_{k+1}, \sqrt{\mu_k}(y_k - y_{k+1})}\\
={}& \frac{\alpha_k^2}{2\mu_k} \| \nabla h(x_{k+1})+q_{k+1}\|_*^2 + \frac{\mu_k}{2}\nm{y_k - y_{k+1}}^2 - \frac{1}{2}\nm{ \frac{\alpha_k}{\sqrt{\mu_k}}( \nabla h(x_{k+1}) + q_{k+1}) - \sqrt{\mu_k} (y_{k} - y_{k+1})}^2\\
={}& \frac{\alpha_k^2}{2\mu_k} \| \nabla h(x_{k+1})+q_{k+1}\|_*^2 + \frac{\mu_k}{2}\nm{y_k - y_{k+1}}^2 - \frac{1}{2}\alpha_k^2\mu_k\nm{ x_{k+1} - y_{k+1}}^2.
\end{aligned}
$$
Combining altogether, we get the desired inequality. 
\end{proof}

\begin{algorithm}[htbp]
\caption{A$^2$GD method for composite optimization}
\label{alg:adaptiveHNAGcomposite}

\KwIn{$x_0, y_0 \in \mathbb{R}^n$, $L_0, \mu_0, R > 0$, $\mathrm{tol}>0$, $\varepsilon>0$, $m\geq 1$}
\While{$k=0$ or $\|\nabla f(x_k)+q_k\|> {\rm tol}\|\nabla f(x_0)\|$}{
    $\alpha_k \gets \sqrt{\mu_k / L_k}$\;
    $w_{k+1} \gets \frac{1}{\alpha_k+1}x_k + \frac{\alpha_k}{\alpha_k+1}y_k - \frac{1}{L_k(\alpha_k+1)}\gh(x_k)$\;
    $x_{k+1} \gets \mathrm{prox}_{\frac{1}{L_k(\alpha_k+1)}g}(w_{k+1})$\;
    $q_{k+1} \gets L_k(\alpha_k+1)(w_{k+1} - x_{k+1})$\;
    $y_{k+1} \gets \frac{\alpha_k}{\alpha_k+1}x_{k+1} + \frac{1}{\alpha_k+1}y_k - \frac{\alpha_k}{\mu_k(\alpha_k+1)}(\gh(x_{k+1}) + q_{k+1})$\;

    $b_k^{(1)} \gets \frac{1}{2L_k} \|\gh(x_{k+1}) - \gh(x_k)\|^2 - D_h(x_k, x_{k+1})$\;
    $b_k^{(2)} \gets -\frac{1}{2L_k} \|\nabla h(x_k) + q_{k+1}\|_*^2 + \frac{\alpha_k \mu_k}{2} \left(R^2 - (1+\alpha_k)\|x_{k+1} - y_{k+1}\|^2 \right)$\;
    $p_k \gets \frac{1}{1+\alpha_k}(p_{k-1} + b_k^{(1)} + b_k^{(2)})$\;

    \If{$p_k > 0$}{
            \If{$b_k^{(1)} > 0$}{
            $v\gets \frac{2L_k D_f(x_k,x_{k+1})}{\|\nabla f(x_{k+1}) - \nabla f(x_k)\|^2}$,~$L_k\gets 3L_k/v$\;
            }
            \If{$b_k^{(2)} > 0$}{ 
                $\mu_k \gets \max\left\{\varepsilon,\min\left\{\mu_k, \frac{\|\gh(x_k)+q_{k+1}\|^{4/3}}{L_k^{1/3}(R^2 - (1+\alpha_k)\|x_{k+1} - y_{k+1}\|^2)^{2/3}}\right\}\right\}$\;
            }
            Go to line 2\;
    }

    \Else{
        $L_k \gets \frac{\|\gh(x_{k+1}) - \gh(x_{k})\|^2}{2 D_h(x_k, x_{k+1})}$\;
$\mu_{k+1} \gets \max\left\{\varepsilon,\min\left\{\mu_k, \frac{\|\gh(x_k)+q_{k+1}\|^{4/3}}{L_k^{1/3}(R^2 - (1+\alpha_k)\|x_{k+1} - y_{k+1}\|^2)^{2/3}}\right\}\right\}$\;
    
    }
\If{\textbf{decay condition}}{$\varepsilon\gets \varepsilon/2$\;
$m \gets \lfloor \sqrt{2} \cdot m \rfloor + 1$\;}
            $k\gets k+1$\;
}
\end{algorithm}

\begin{theorem}\label{thm:conv-ex1-ode-NAG-composite}
Let $(x_k, y_k)$ be the iterates generated by Algorithm \ref{alg:adaptiveHNAGcomposite}. Assume function $f$ is $\mu$-convex with $\mu\geq 0$. Assume there exists $R>0$ such that 
$$\|x_k-x^*\|\leq R,\qquad \forall~k\geq 0.$$

Let $k_s$ be the total number of steps after halving $\varepsilon$ exactly $s$ times, i.e. $\varepsilon = 2^{-s}\varepsilon_0$. 

\setlist[enumerate]{leftmargin=20pt,labelsep=0.6em}

\begin{enumerate}

\item When $\mu=0$, ther exists a constant $C > 0$ so that
$$
\frac{\mathcal{E}_{k_s}}{\mathcal{E}_0} \leq \frac{R^2 + 1}{\left( C k_s + \varepsilon_0^{-1/2} \right)^2} = \mathcal{O}\left( \frac{1}{k_s^2} \right)
$$

\item When $\mu > 0$, the iteration number to achieve $\mathcal{E}_{k_s}/\mathcal{E}_0 \leq (R^2+1) 2^{-s}\varepsilon_0\leq {\rm tol}$ is bounded by $\mathcal O(\sqrt{L/\mu}\ \ln {\rm tol})$, 

\end{enumerate}
where $\mathcal E_k = \mathcal E(\bs z_k; \mu_k)= {}f(x_k)-f(x^{\star}) + \frac{\mu_k}{2} \nm{y_k-x^{\star}}^2.$

\end{theorem}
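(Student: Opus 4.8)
The plan is to mirror the proof of Theorem~\ref{thm:conv-ex1-ode-NAG} essentially verbatim, with the composite one-step inequality (the lemma immediately preceding Algorithm~\ref{alg:adaptiveHNAGcomposite}) playing the role that the identity of Lemma~\ref{lem:identityA2GD} plays in the smooth case. The crucial observation is that this composite inequality has exactly the same three-term structure (I)--(III) as the smooth identity, with $\nabla f$ replaced by $\nabla h + q$, $D_f$ replaced by $D_h$, and with the Lyapunov function $\mathcal{E}(\bs z;\mu)=\mathcal{E}_h(\bs z;\mu)+(g(x)-g(x^*))$ now carrying the nonsmooth part $g$. Since every estimate in the smooth proof relies only on the algebraic form of (I)--(III) and on the sign of the accumulated perturbation, the whole perturbation-accumulation machinery transfers once the gradient is read as $\nabla h + q_{k+1}$.

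First I would establish the composite analogue of the single-inner-iteration Lemma~\ref{lem:single_inner_iter_conv}. Setting $\alpha_k=\sqrt{\mu_k/L_k}$ makes the coefficient $\tfrac{\alpha_k^2}{\mu_k}-\tfrac{1}{L_k}$ vanish, so term (I) drops. Term (II) is exactly $b_k^{(1)}$, and term (III) is dominated by $b_k^{(2)}$: in the convex case one discards $D_h(x^*,x_{k+1})\ge 0$, while in the strongly convex case one invokes $D_h(x^*,x_{k+1})\ge\tfrac{\mu}{2}\|x_{k+1}-x^*\|^2$ to bound $\|x_{k+1}-x^*\|^2-\tfrac{2}{\mu_k}D_h(x^*,x_{k+1})$ by $R_k^2=(1-\mu/\mu_k)R^2$. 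The line search in Algorithm~\ref{alg:adaptiveHNAGcomposite} enforces $p_k\le 0$, and combining the bounds $\sqrt{\varepsilon/(rL)}\le\alpha_k\le\sqrt{\varepsilon/l}$ with Lemma~\ref{lm:cumsum} yields the linear contraction
\[
\mathcal{E}(\bs z_k;\varepsilon)\le\Bigl(\tfrac{1}{1+\sqrt{\varepsilon/(rL)}}\Bigr)^{k}\mathcal{E}(\bs z_0;\varepsilon)+\tfrac{\varepsilon}{2}R^2,
\]
identical in form to the smooth case.

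With this inner-loop estimate in hand, the two regimes split exactly as before. For $\mu=0$, the $\varepsilon$-halving schedule together with the decay condition reproduces the argument of \citep[Theorem~8.4]{chen2025acceleratedgradientmethodsvariable}, giving the $\mathcal{O}(1/k_s^2)$ rate with the explicit constant $(R^2+1)/(Ck_s+\varepsilon_0^{-1/2})^2$. For $\mu>0$, the update $R_k^2=(1-\mu/\mu_k)R^2$ forces $R_k^2\le 0$ whenever $\mu_k\ge\mu$, so $\mu_k$ never drops below $\mu/2$; combined with $L_k\le rL$ this caps the effective condition number at $\kappa_{\mathrm{eff}}=L_k/\mu_k\le 2rL/\mu$, and the linear estimate gives the iteration count $\mathcal{O}(\sqrt{L/\mu}\,\ln(1/\mathrm{tol}))$.

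I expect the main obstacle to be the bookkeeping at the transition from identity to inequality. Lemma~\ref{lem:identityA2GD} is an exact identity, whereas the composite one-step lemma is an inequality arising from the subgradient relation $q_{k+1}\in\partial g(x_{k+1})$ together with the convexity estimate in Lemma~\ref{lem:continuousODEcomposite}. I must verify that every inequality points in the direction preserving an \emph{upper} bound $\mathcal{E}_{k+1}\le\delta_k(\mathcal{E}_k+b_k)$, so that Lemma~\ref{lm:cumsum} still applies; in particular the subgradient inequality used to expand $\mathcal{E}(\bs z_{k+1})-\mathcal{E}(\bs z_k)$ and the one internal to Lemma~\ref{lem:continuousODEcomposite} must be simultaneously compatible with the sign constraint $p_k\le 0$. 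This is the only place where the composite argument genuinely departs from the smooth one; once the signs are checked, the remaining steps are routine transcriptions.
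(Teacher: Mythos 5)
Your proposal matches the paper's (largely implicit) argument: the paper states this theorem without writing out a proof, but its Appendix~C develops exactly the composite analogues you invoke --- the one-step inequality with $\nabla f$ replaced by $\nabla h+q_{k+1}$ and $D_f$ by $D_h$ --- and the intended proof is precisely the transcription of Lemma~\ref{lem:single_inner_iter_conv} and the two-case analysis of Theorem~\ref{thm:conv-ex1-ode-NAG} that you describe. Your flagged concern about the identity-to-inequality transition is the right one to check, and it resolves favorably since the composite lemma bounds the Lyapunov increment from above, which is the direction Lemma~\ref{lm:cumsum} requires.
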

\section*{\RV{Appendix D: Discussion}}\label{appendix:Ablation}

\paragraph{The Relationship of Running Time with Gradient Evaluations}

In the main text, we report convergence primarily in terms of gradient evaluations, which dominate the computational cost for all methods. Consequently, wall-clock time is essentially proportional to the number of gradient (or proximal-gradient) evaluations. To confirm this, we record detailed timings on the composite MLE task, counting each proximal step as one gradient evaluation.

\begin{table}[h]
\centering
\begin{tabular}{l c c c c c c}
\toprule
Method & \# Iter & \# Grad Eval & Total time & Time/Iter & Grad Time & Grad \% \\
\midrule
A$^2$GD     & 2376  & 2382  & 2.71  & $1.14\times 10^{-3}$ & 1.71 & 63.2\% \\
AdProxGD & 20941 & 20941 & 18.68 & $8.92\times 10^{-4}$ & 14.68 & 78.6\% \\
FISTA    & 18041 & 18041 & 18.21 & $1.01\times 10^{-3}$ & 13.80 & 75.8\% \\
AOR-HB   & 8877  & 8877  & 8.81  & $9.93\times 10^{-4}$ & 6.70 & 76.1\% \\
\bottomrule
\end{tabular}
\caption{Computation cost breakdown on the MLE problem (2).}
\end{table}

Gradient (and proximal-gradient) evaluations account for over 60\% of the total running time for every method. Although A$^2$GD incurs slightly higher per-iteration cost due to a few extra vector operations, its much smaller number of gradient evaluations yields nearly a 70\% reduction in total time. We also provide error curves versus wall-clock time for the regularized logistic regression on Adult Census Income.

\paragraph{Ablation Study on the Choice of Hyper-parameter $\varepsilon$}
The tolerance $\varepsilon$ is a small positive number that controls $\mu_k$ from below. It is essential in the proof of linear/sub-linear convergence (Theorem \ref{thm:conv-ex1-ode-NAG}). Numerically, manipulating this parameter will keep the convergence of the algorithm, while making a difference to the convergence rate. This is verified in an ablation study on the regularized logistic regression problem on the Adult Census Income dataset. For $-2\leq i\leq 2$, the algorithm named "A2GD-eps$i$" means the manual choice $\varepsilon=1/10^{-6-i}$, where $i=0$ gives the choice of $\varepsilon$ that we use for all numerical examples in the main context. Figure \ref{fig:ablation_manualeps} agrees with the theory, and shows robustness of A2GD with the parameter $\varepsilon$.



\begin{figure}[t]
    \centering

    \begin{subfigure}{0.46\textwidth}
        \centering
        \includegraphics[width=\linewidth]{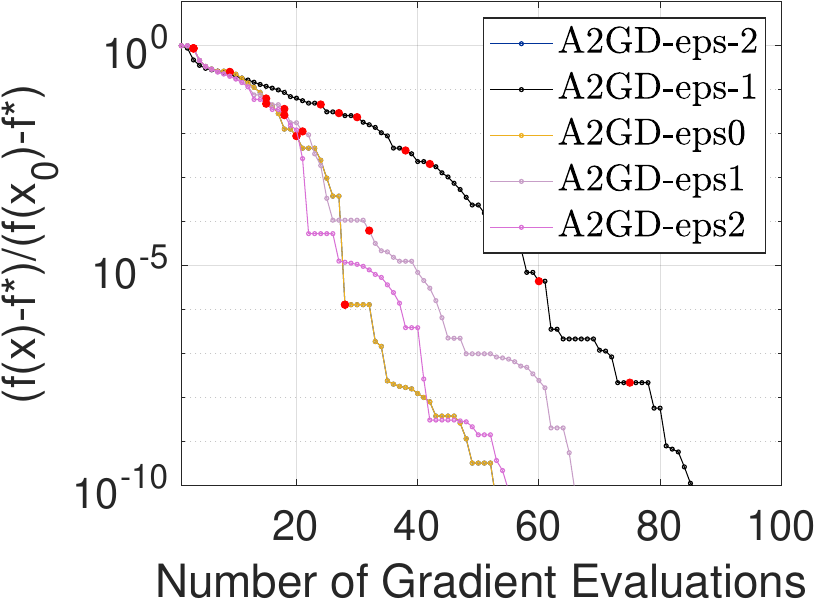}
    \end{subfigure}
    \hfill
    \begin{subfigure}{0.46\textwidth}
        \centering
        \includegraphics[width=\linewidth]{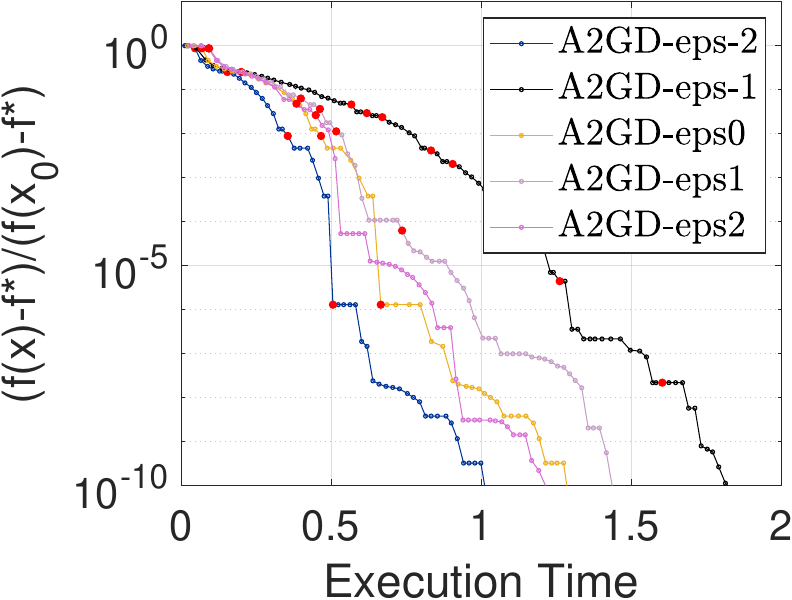}
    \end{subfigure}

    \caption{A2GD with different choices of $\varepsilon$, in terms of number of gradients (left) and execution time (right).}
    \label{fig:ablation_manualeps}
\end{figure}

\paragraph{Further Discussion on the Problem Scaling}

Apart from the linear example which we discussed in main context, we also tested a range of $\lambda$ values ($\lambda=10^{-2}, 1, 10^2$) in the regularized logistic regression problem. In this case, the $\sqrt{\kappa}$ scaling is less apparent because our method performs very well when $\lambda$ is close to $0$. Even in the case $\lambda = 10^{-2}$, the adaptive $\mu_k$ does not necessarily remain small during the iterations, which can effectively improve the convergence beyond what the nominal condition number would suggest. Details are in Figure \ref{fig:ablation_manuallambda}.

\begin{figure}[t]
    \centering

    \begin{subfigure}{0.46\textwidth}
        \centering
        \includegraphics[width=\linewidth]{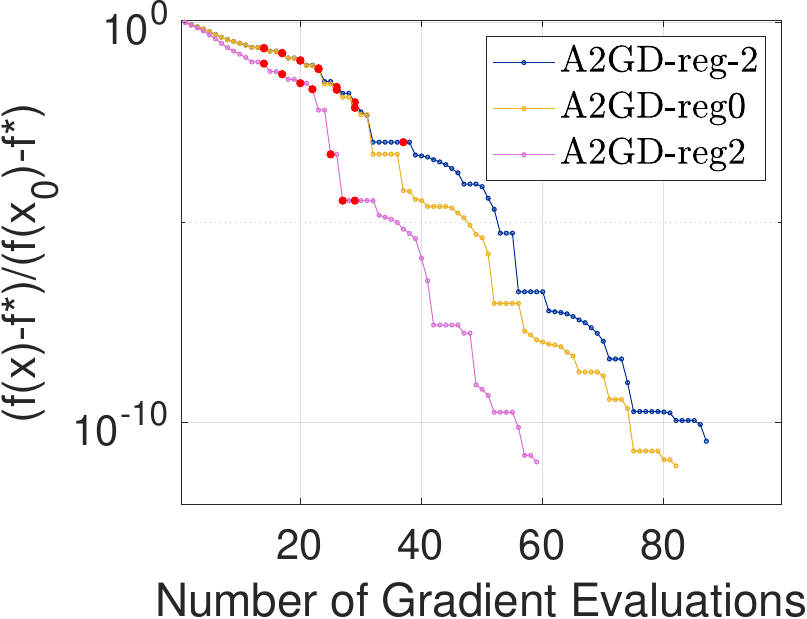}
    \end{subfigure}
    \hfill
    \begin{subfigure}{0.46\textwidth}
        \centering
        \includegraphics[width=\linewidth]{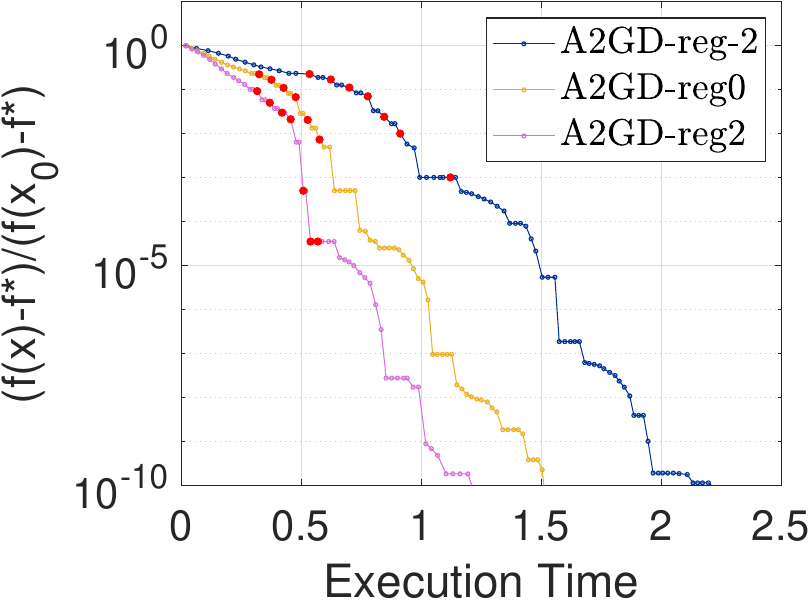}
    \end{subfigure}

    \caption{A$^2$GD on regularized logistic regression problem with different regularization constants, in terms of number of gradients (left) and execution time (right).}
    \label{fig:ablation_manuallambda}
\end{figure}



\paragraph{An Empirical Study of Adaptive Degree of Freedom}
As mentioned in the Introduction Section, we can classify accelerated gradient methods via the adaptive degree of freedom (ADoF). ADoF-$0$ simply means non-adaptive accelerated methods, for example NAG. ADoF-$1$ and ADoF-$2$ mean the method has $1$ and $2$ adaptive parameters respectively. For example, the AcceleGrad method proposed in \citet{levy2018online} belongs to ADoF-$1$, while our A$^2$GD method and a few other baselines in the main context belong to ADoF-$2$. To see their difference numerically, we compare the 3 methods on the regularized logistic regression problem on Adult Census Income dataset (Figure \ref{fig:ablation_ADoF}). As neither AcceleGrad nor NAG requires restarting, we also use A$^2$GD-plain for a fair comparison. AcceleGrad performs slightly better than NAG by reducing the oscillations and converging in a reasonable pace, but its improvement is limited due to the semi-adaptivity. In contrast, A$^2$GD outperforms AcceleGrad and NAG dramatically. This can be seen as an example where the additional adaptive parameter (in A$^2$GD, the parameter is $\mu$) brings much improvement. 

\begin{figure}[t]
    \centering

    \begin{subfigure}{0.46\textwidth}
        \centering
        \includegraphics[width=\linewidth]{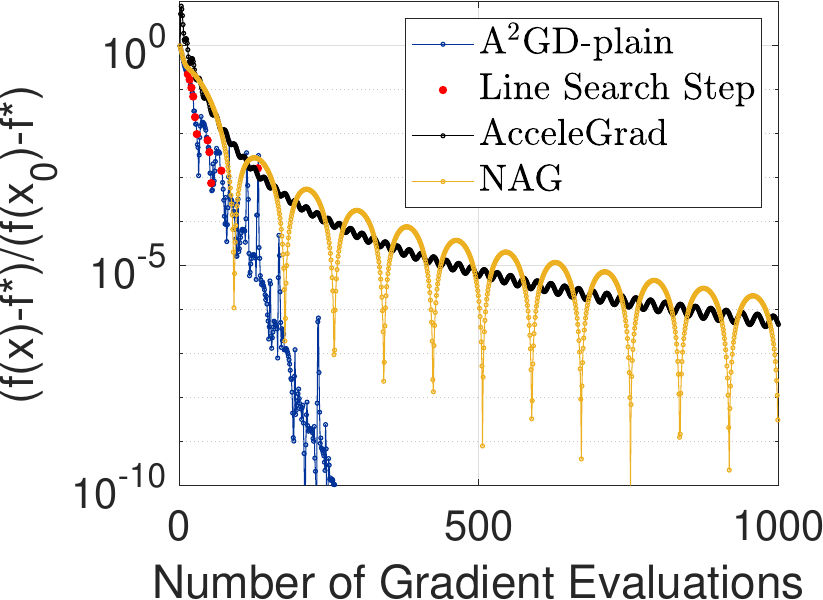}
    \end{subfigure}
    \hfill
    \begin{subfigure}{0.46\textwidth}
        \centering
        \includegraphics[width=\linewidth]{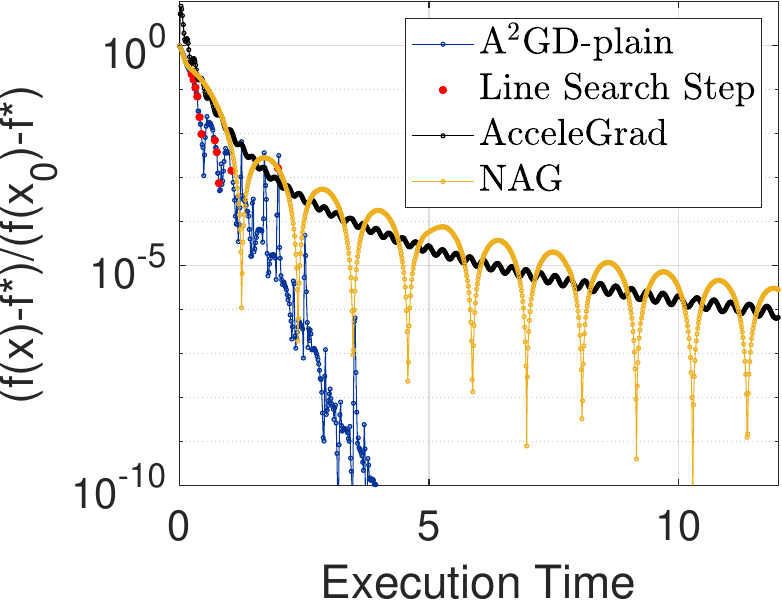}
    \end{subfigure}

    \caption{Comparison of methods with adaptive degree of freedom $0,1,2$. Results are in terms of number of gradients (left) and execution time (right).}
    \label{fig:ablation_ADoF}
\end{figure}

\section*{\RV{Appendix E: Convergence Graphs in terms of execution Time}}

Here, we present the convergence graphs in which the x-axis represents execution time (unit: second). As discussed, for all tested gradient methods, execution time is approximately proportional to the number of gradients, but we still present them here for completeness.

\paragraph{Regularized Logistic Regression}
Below are the results for the regularized logistic regression problem.\\
\begin{minipage}[htdp]{0.31\textwidth}
    \centering
    \includegraphics[width=0.975\linewidth]{./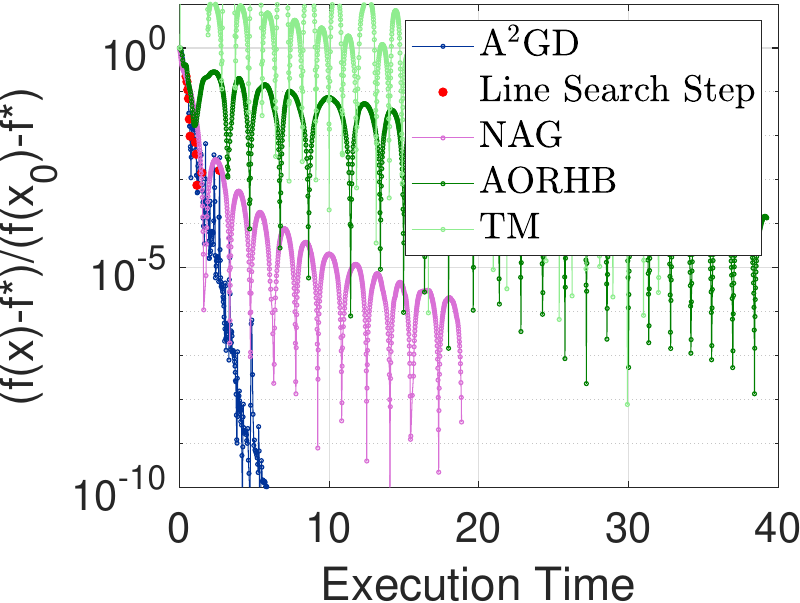}
    \captionsetup{hypcap=false}
    \captionof{figure}{Comparison without restarting.}      \label{fig:lor_nonadaptive_norestart-time}
\end{minipage}%
\hfill
\begin{minipage}[htdp]{0.31\textwidth}
    \centering
    \includegraphics[width=0.975\linewidth]{./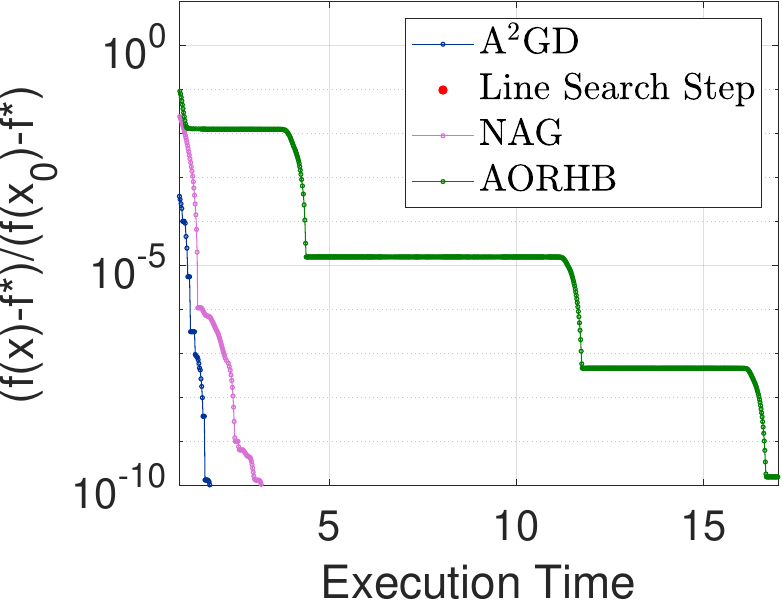}
    \captionsetup{hypcap=false}
    \captionof{figure}{Comparison with restarting.
} 
    \label{fig:lor_nonadaptive_restart-time}
\end{minipage}
\hfill
\begin{minipage}[htdp]{0.31\textwidth}
    \centering
    \includegraphics[width=0.95\linewidth]{./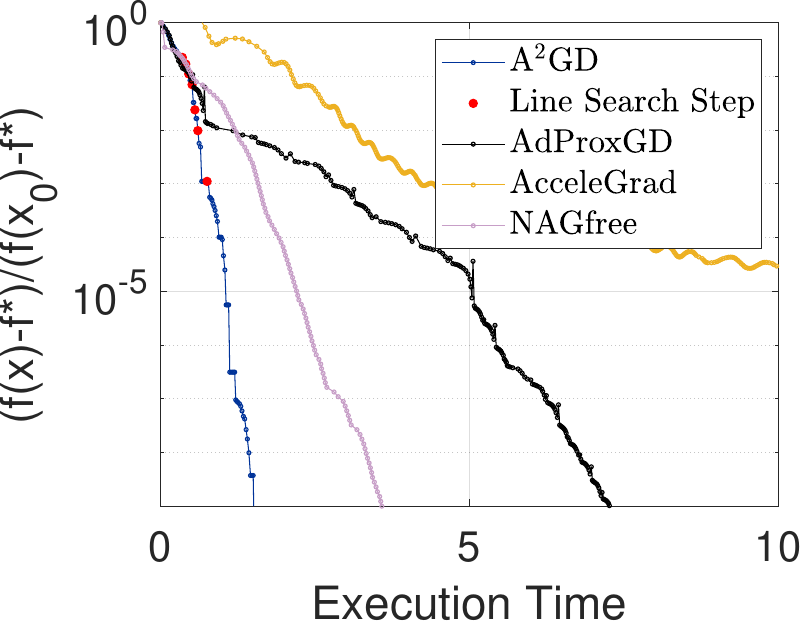}
    \captionsetup{hypcap=false}
    \captionof{figure}{A$^{2}$GD compared to other adaptive methods.
} 
    \label{fig:lor_adaptive-time}
\end{minipage}

\paragraph{Maximum Likelihood Estimation}
Below are the results for the maximum likelihood estimation problem.\\

\begin{minipage}[htdp]{0.45\textwidth}
    \centering
    \includegraphics[width=0.75\linewidth]{./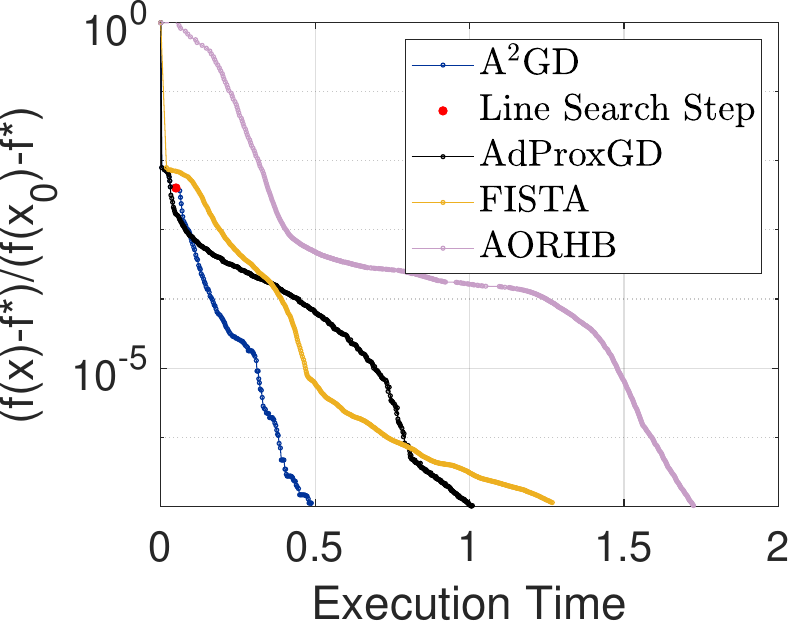}
    \captionsetup{hypcap=false}
    \captionof{figure}{Error curves under setting (1).}      \label{fig:mle_easy-time}
\end{minipage}%
\hfill
\begin{minipage}[htdp]{0.45\textwidth}
    \centering
    \includegraphics[width=0.75\linewidth]{./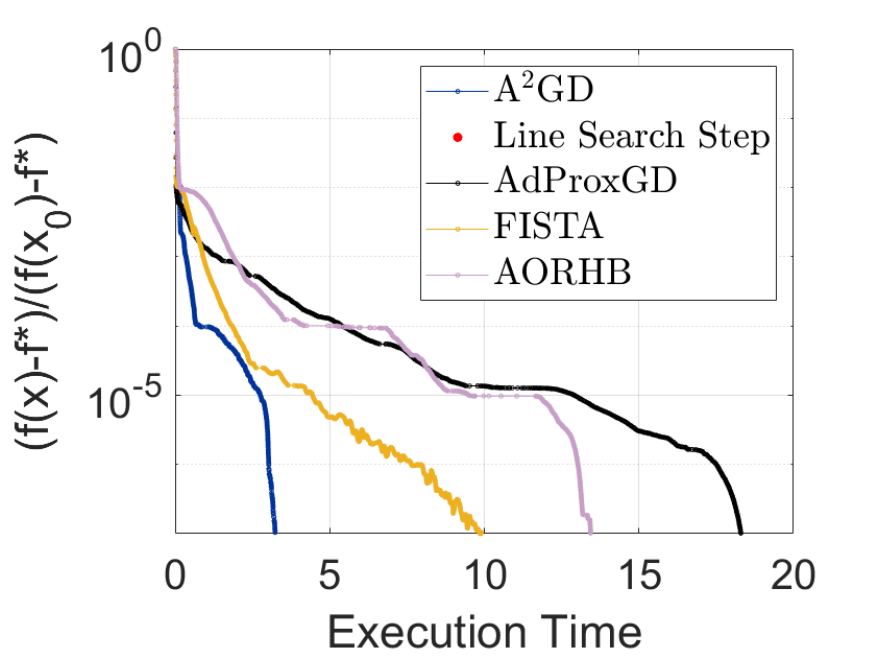}
    \captionsetup{hypcap=false}
    \captionof{figure}{Error curves under setting (2).
} \label{fig:mle_hard-time}
\end{minipage}

\paragraph{$\ell_{1-2}$ Nonconvex Minimization} Below are the results for the $\ell_{1-2}$ nonconvex minimization problem.\\

\begin{figure}[htbp]
    \centering
        \label{fig:LASSO_500-1000d-time}
\includegraphics[width=0.375\linewidth]{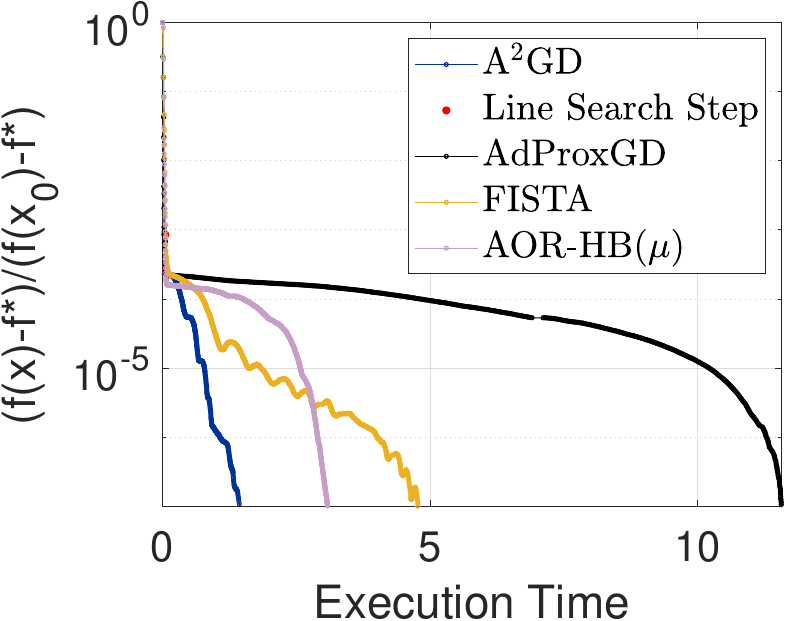}
\captionof{figure}{Error curve for $\ell_{1\text{-}2}$ problem with $n=500, p=1000$. 
}
\end{figure}
\section*{LLM usage}
In preparing this manuscript, large language models (LLMs) were employed exclusively to assist with language-related tasks, such as improving readability, grammar, and style. The models were not used for research ideation, development of methods, data analysis, or interpretation of results. All scientific content, including problem formulation, theoretical analysis, and experimental validation, was conceived, executed, and verified entirely by the authors. The authors bear full responsibility for the accuracy and integrity of the manuscript.

\section*{Ethics statement}
This work is purely theoretical and algorithmic, focusing on convex optimization methods. It does not involve human subjects, sensitive data, or applications that raise ethical concerns related to privacy, security, fairness, or potential harm. All experiments are based on publicly available datasets or synthetic data generated by standard procedures. The authors believe that this work fully adheres to the ICLR Code of Ethics.

\section*{Reproducibility statement}
We have taken several measures to ensure the reproducibility of our results. All theoretical assumptions are explicitly stated, and complete proofs are provided in the appendix. For the experimental evaluation, we describe the setup, parameter choices, and baselines in detail in the main text. The source code for our algorithms and experiments are available as supplementary materials. Together, these resources should allow others to reproduce and verify our theoretical and empirical findings.

\end{document}